\documentclass[11pt]{amsart}
\usepackage{amscd}
\usepackage[all]{xy}
\usepackage{graphicx}
\usepackage{amsmath}
\usepackage{amsfonts}
\usepackage{amssymb}
\usepackage{latexsym}
\usepackage{slashed}
\usepackage{soul}
\usepackage{comment}
\usepackage{color}

\usepackage{amsmath, latexsym, amssymb}
\numberwithin{equation}{section}
\theoremstyle{plain}
\newtheorem{lemma}{Lemma}[section]

\newtheorem{theorem}[lemma]{Theorem}
\newtheorem{corollary}[lemma]{Corollary}

\theoremstyle{definition}
\newtheorem{definition}[lemma]{Definition}
\newtheorem{remark}[lemma]{Remark}
\newtheorem{example}[lemma]{Example}

\DeclareGraphicsRule{.tif}{png}{.png}{`convert #1 `dirname #1`/`basename #1 .tif`.png} 
\begin{document}
\newcommand{\R}{{\mathbb R}}
\newcommand{\C}{{\mathbb C}}
\newcommand{\F}{{\mathbb F}}
\renewcommand{\O}{{\mathbb O}}
\newcommand{\Z}{{\mathbb Z}} 
\newcommand{\N}{{\mathbb N}}
\newcommand{\Q}{{\mathbb Q}}
\newcommand{\Sb}{{\mathbb S}}
\renewcommand{\H}{{\mathbb H}}
\renewcommand{\P}{{\mathbb P}}
\newcommand{\Aa}{{\mathcal A}}
\newcommand{\Bb}{{\mathcal B}}
\newcommand{\Cc}{{\mathcal C}}    
\newcommand{\Dd}{{\mathcal D}}
\newcommand{\Ee}{{\mathcal E}}
\newcommand{\Ff}{{\mathcal F}}
\newcommand{\Gg}{{\mathcal G}}    
\newcommand{\Hh}{{\mathcal H}}
\newcommand{\Kk}{{\mathcal K}}
\newcommand{\Ii}{{\mathcal I}}
\newcommand{\Jj}{{\mathcal J}}
\newcommand{\Ll}{{\mathcal L}}    
\newcommand{\Mm}{{\mathcal M}}    
\newcommand{\Nn}{{\mathcal N}}
\newcommand{\Oo}{{\mathcal O}}
\newcommand{\Pp}{{\mathcal P}}
\newcommand{\Qq}{{\mathcal Q}}
\newcommand{\Rr}{{\mathcal R}}
\newcommand{\Ss}{{\mathcal S}}
\newcommand{\Tt}{{\mathcal T}}
\newcommand{\Uu}{{\mathcal U}}
\newcommand{\Vv}{{\mathcal V}}
\newcommand{\Ww}{{\mathcal W}}
\newcommand{\Xx}{{\mathcal X}}
\newcommand{\Yy}{{\mathcal Y}}
\newcommand{\Zz}{{\mathcal Z}}

\newcommand{\Ds}{{\slashed D}}

\newcommand{\zt}{{\tilde z}}
\newcommand{\xt}{{\tilde x}}
\newcommand{\Ht}{\widetilde{H}}
\newcommand{\ut}{{\tilde u}}
\newcommand{\Mt}{{\widetilde M}}
\newcommand{\Llt}{{\widetilde{\mathcal L}}}
\newcommand{\yt}{{\tilde y}}
\newcommand{\vt}{{\tilde v}}
\newcommand{\Ppt}{{\widetilde{\mathcal P}}}
\newcommand{\bp }{{\bar \partial}} 
\newcommand{\ad}{{\rm ad}}
\newcommand{\Om}{{\Omega}}
\newcommand{\om}{{\omega}}
\newcommand{\eps}{{\varepsilon}}
\newcommand{\Di}{{\rm Diff}}
\renewcommand{\Im}{{ \rm Im \,}}
\renewcommand{\Re}{{\rm Re \,}}

\renewcommand{\a}{{\mathfrak a}}
\renewcommand{\b}{{\mathfrak b}}
\newcommand{\e}{{\mathfrak e}}
\renewcommand{\k}{{\mathfrak k}}
\newcommand{\m}{{\mathfrak m}}
\newcommand{\pg}{{\mathfrak p}}
\newcommand{\g}{{\mathfrak g}}
\newcommand{\gl}{{\mathfrak gl}}
\newcommand{\h}{{\mathfrak h}}
\renewcommand{\l}{{\mathfrak l}}
\newcommand{\sm}{{\mathfrak m}}
\newcommand{\n}{{\mathfrak n}}
\newcommand{\s}{{\mathfrak s}}
\renewcommand{\o}{{\mathfrak o}}
\renewcommand{\so}{{\mathfrak so}}
\renewcommand{\u}{{\mathfrak u}}
\newcommand{\su}{{\mathfrak su}}
\newcommand{\X}{{\mathfrak X}}

\newcommand{\ssl}{{\mathfrak sl}}
\newcommand{\ssp}{{\mathfrak sp}}
\renewcommand{\t}{{\mathfrak t }}
\newcommand{\Cinf}{C^{\infty}}
\newcommand{\la}{\langle}
\newcommand{\ra}{\rangle}
\newcommand{\ha}{\scriptstyle\frac{1}{2}}
\newcommand{\p}{{\partial}}
\newcommand{\notsub}{\not\subset}
\newcommand{\iI}{{I}}               
\newcommand{\bI}{{\partial I}}      
\newcommand{\LRA}{\Longrightarrow}
\newcommand{\LLA}{\Longleftarrow}
\newcommand{\lra}{\longrightarrow}
\newcommand{\LLR}{\Longleftrightarrow}
\newcommand{\lla}{\longleftarrow}
\newcommand{\INTO}{\hookrightarrow}

\newcommand{\QED}{\hfill$\Box$\medskip}
\newcommand{\UuU}{\Upsilon _{\delta}(H_0) \times \Uu _{\delta} (J_0)}
\newcommand{\bm}{\boldmath}
\newcommand{\coker}{\mbox{coker}}

\def\hook{\mbox{}\begin{picture}(10,10)\put(1,0){\line(1,0){7}}
  \put(8,0){\line(0,1){7}}\end{picture}\mbox{}}

\title[McLean's  second variation formula]{\large McLean's  second variation formula  revisited}
\author[H.V. L\^e  and  J. Van\v zura]{H\^ong V\^an L\^e and Ji\v r\'i  Van\v zura}
 \date{\today}
 \thanks {HVL and JV are partially supported by RVO: 67985840}
\date{\today}
\medskip
\address{Institute  of Mathematics,   CAS,
Zitna 25, 11567  Praha 1, Czech Republic} 

\abstract 
We   revisit       McLean's  second variation formulas for  calibrated submanifolds  in exceptional geometries, and correct  his formulas  concerning  associative  submanifolds  and Cayley  submanifolds, using  a  unified treatment   based  on the  (relative)   calibration method and Harvey-Lawson's identities.   
\endabstract 
\subjclass[2010]{Primary  53C40, 53C38}
\maketitle
\tableofcontents

{\it Key words: calibrated submanifold,   second variation,    Harvey-Lawson's identity}

\section{Introduction}\label{sec:intro} 
Calibrated  geometry has been invented by Harvey-Lawson in 1982 \cite{HL1982} motivated  by rich  theories of  complex  manifolds, exceptional geometries  and  minimal  submanifolds. We  refer the  reader   to \cite{Morgan2009} for  an extensive   survey   on calibration  method. 
In 1998 McLean  published a paper on deformation of calibrated submanifolds  \cite{McLean1998}, inspired  by  similarities  between  calibrated submanifolds and  complex  submanifolds.
One important part  of his  study  is  the   second variation  of volume of compact calibrated  submanifolds, which is  also the subject of  our note.  
McLean   distinguished  two  families of calibrated  submanifolds  in  exceptional geometries. The first  family consists  of  special  Lagrangian and  coassociative  submanifolds.
The second family consists of  associative and Cayley submanifolds. In the first family   the normal bundle of a calibrated  submanifold is  isomorphic  to a vector bundle  intrinsic  to the submanifold, namely  the normal bundle of a   special Lagrangian  submanifold $L$ is isomorphic  to the tangent bundle $TL$
(or the cotangent bundle  $T^*L$ via  the metric)  and the normal bundle  of a  coassociative  submanifold $L$  is isomorphic to  the  bundle  of self-dual two-forms.  From a computational point of view, special Lagrangian  and  coassociative   submanifolds $L$ can be defined in terms of vanishing of closed forms on $L$. 
Moreover    deformation of calibrated submanifolds   in this family  is unobstructed.
  In the  second  family the normal bundle of a calibrated  submanifold is not intrinsic, namely  the normal  bundle of an associative  submanifold  $L$ is trivial 
  (Lemma \ref{lem:lemcorti}) and the normal  bundle of a Cayley submanifold is  a twisted spinor  bundle \cite[Section 6]{McLean1998}.
From  a computational  point of view,  associative  and Cayley  submanifolds  cannot be defined  in terms  of the vanishing of closed  forms, but they can be defined in terms  of the vanishing
of  certain vector valued  forms. 
 In particular, deformation theory   for  calibrated submanifolds  in the second family has a different character than  the  one for the first family. 

In \cite[Theorem 2.4, p. 711]{McLean1998}, using     moving frame  method, McLean derived  a general   formula for  the second variation of the  volume of a compact calibrated  submanifold.
Applying this formula to calibrated submanifolds of the first and second family  he obtained   formulas    which   are similar to Simons'
  second variation formula for  K\"ahler
submanifolds \cite[p. 78]{Simons1968}.  McLean's second variation  formula  for special Lagrangian submanifolds  has been  revisited by  L\^e-Schwachh\"ofer in \cite{LS2014}, where
they  extended  the  relative  calibration method  developed by  L\^e in \cite{Le1989, Le1990} to derive  the second variation  formula for compact Lagrangian submanifolds in strict nearly K\"ahler 6-manifolds.  They  also indicated  how their method  can be applied to   calibrated submanifolds, whose corresponding  calibration  satisfies  the long  version  of    Harvey-Lawson's  identity, see Remark  \ref{rem:HLI}. As an example, they analyzed  the second variation formula for  special  Lagrangian submanifolds.

In this  note we   revisit McLean's second variation  formula  for  associative  and   coassociative  submanifolds  in  $G_2$-manifolds, and Cayley submanifolds  in   Spin(7)-manifolds.  Our  main observation is     that   all  calibrations  under consideration   satisfy the following {\it Harvey-Lawson's  identity}.

\begin{definition}\label{def:HLI}  A   calibration  $\varphi\in  \Lambda ^k (\R^n)^*$    is  said to satisfy  {\it Harvey-Lawson's  identity},  if there
exists a vector   valued $k$-form $\Psi \in  \Lambda ^k (\R^n)^* \otimes  \R^m$ such that
\begin{equation}
[\varphi (\xi)]^2  + | \Psi (\xi)| ^2  = |\xi|  ^2  \text{  for  all } \xi \in Gr_k (\R^n).\label{eq:HLI1}
\end{equation}
Let $M$ be  a Riemannian   manifold. A calibration $\varphi \in \Om  ^k(M^n)$  is said to satisfy {\it Harvey-Lawson's identity}, if there exists
a Riemannian vector bundle  $E$ over  $M^n$  and   an $E$-valued  $k$-form $\Psi \in \Om^k(M, E)$ such that  for all $ x\in M^n$ we have
\begin{equation}
[\varphi (\xi)]^2  + | \Psi (\xi)| ^2  = |\xi|  ^2  \text{  for  all } \xi \in Gr_k (T_xM^n).\label{eq:HLI2}
\end{equation}
\end{definition}

\begin{remark}\label{rem:HLI}   Harvey-Lawson's   identity appears  many times   in  \cite{HL1982}, but instead  of $| \Psi (\xi)|^2$
Harvey-Lawson usually wrote {\it its long version}  $\sum_i |\Psi _i (\xi)| ^2 $, 
see Question 6.5  and Formula (6.6) in \cite[p. 68]{HL1982}, as   well as  Formulas (6.16) (p. 71),  (6.17) (p.73), Theorem  1.7 (p.  88) of the cited paper   and  Lemmas \ref{lem:hlass}, \ref{lem:tau7}, \ref{lem:hlcay} below.   All  calibrated  submanifolds  considered   in McLean's  paper have corresponding calibrations  that satisfy  Harvey-Lawson's  identity, see also  Remark \ref{rem:gen} below.
\end{remark}

In this    note  we  prove the  following.

\begin{theorem}[Main Theorem]\label{thm:main}    Let $\varphi$ be a calibration on    a Riemannian manifold  $M$  and   $\Psi \in \Om^* (M, E)$  such that
$\varphi$ and $\Psi$ satisfy  Harvey-Lawson's  identity  (\ref{eq:HLI2}). Assume that  $L$ is a compact oriented $\varphi$-calibrated  submanifold  and $V$ is
normal  vector  field  on  $L$.  Then  the  second  variation  of the volume  of $L$  with variation  field  $V$  is given  by
$${d^2\over dt^2}|_{ t =0} vol(L_t) = \int _L | \nabla _{\p t}|_{ t =0} \Psi (( \exp  t V) _* ( \xi(x)))|^2  dvol_x.$$
Here\\
$\bullet$ $\xi (x)$ is  the unit decomposable $k$-vector  that is associated to $T_x L$,\\ 
$\bullet$ $\exp tV$ denotes the flow  on  a neighborhood  of  $L$ that is generated  by   a  vector  field  whose value at $x \in L$  is equal to $V$,
and $L_t = \exp tV (L)$,\\
$\bullet$ $ \Psi (( \exp  t V) _* ( \xi(x)))\in E_{ \exp  t V  (x)}  $  and  $\nabla _{\p t}\Psi (( \exp  t V) _* ( \xi(x)))$ denotes  the   covariant  derivative   of the  section  $\Psi (( \exp  t V) _* ( \xi_x))\in E|_{ [\exp  t V  (x) ]}$ of the restriction of the vector  bundle $E$ to  the  curve  $[\exp   tV (x)] \subset  M$,  \\
$\bullet$  we assume that   the covariant derivative $ \nabla_{ \p t}$ of $E$ along the curve  $[\exp  tV (x)]$   preserves  the metric  on $E$.
\end{theorem}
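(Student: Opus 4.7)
The plan is to parametrise $L_t$ by $\exp tV$, write $vol(L_t)$ as an integral over $L$ of the norm of the pushed-forward unit $k$-vector, substitute the Harvey-Lawson identity to split this norm into a ``$\varphi$-part'' and a ``$\Psi$-part,'' and then use the closedness of $\varphi$ to eliminate the former from the second derivative.

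First, I would set $\xi_t(x):=(\exp tV)_*\xi(x)$, $f(t,x):=\varphi(\xi_t(x))$, and $G(t,x):=\la\Psi(\xi_t(x)),\Psi(\xi_t(x))\ra_E$, so that the Jacobian formula gives $vol(L_t)=\int_L|\xi_t(x)|\, dvol_x$ and (\ref{eq:HLI2}) yields $|\xi_t(x)|^2=f(t,x)^2+G(t,x)$. Since $L$ is $\varphi$-calibrated, $f(0,\cdot)\equiv 1$ and hence $G(0,\cdot)\equiv 0$. Although $\sqrt{G}$ is not $C^1$ at $t=0$ (it behaves like $|t|$), the combination $h:=\sqrt{f^2+G}$ is smooth. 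Differentiating $h^2=f^2+G$ twice and using the metric-compatibility of $\nabla$ on $E$ (assumption (iv) of the theorem) together with $\Psi(\xi_0(x))=0$, one obtains $G'(0,\cdot)=0$ and $\tfrac12 G''(0,x)=|\nabla_{\p t}|_{t=0}\Psi(\xi_t(x))|^2$, so that
\[ h''(0,x)=f''(0,x)+\bigl|\nabla_{\p t}|_{t=0}\Psi(\xi_t(x))\bigr|^2. \]

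The $f''$-term is eliminated by a closedness argument: because $(\exp tV)^*\varphi|_L=f(t,\cdot)\,dvol_L$, one has $\int_Lf(t,x)\,dvol_x=\int_L(\exp tV)^*\varphi$. Cartan's formula together with $d\varphi=0$ gives $\tfrac{d}{dt}(\exp tV)^*\varphi=d\bigl((\exp tV)^*(i_V\varphi)\bigr)$, and Stokes' theorem on the closed manifold $L$ then forces this integral to be $t$-independent; in particular $\int_Lf''(0,x)\,dvol_x=0$. Integrating $h''(0,x)$ over $L$ produces exactly the formula asserted. As a bonus, the same argument applied to the first derivative shows $\int_Lh'(0,x)\,dvol_x=\int_Lf'(0,x)\,dvol_x=0$, recovering the well-known minimality of a calibrated submanifold. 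The only delicate point is the non-smoothness of $|\Psi(\xi_t)|$ at a calibrated point; this is resolved cleanly by working throughout with the squared quantity $G$ rather than with its square root, and it also explains why the second variation is quadratic in the first covariant derivative of $\Psi$ along the flow.
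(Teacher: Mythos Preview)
Your proof is correct, and it follows a route that is close in spirit to the paper's but organised differently and, in one respect, more economically.

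The paper first invokes minimality of $L$ to get $h'(0,x)=0$ pointwise, which lets it replace $\int_L h''(0)$ by $\tfrac12\int_L (h^2)''(0)$; it then splits $(h^2)''=(f^2)''+G''$ and handles the two pieces separately. The $G''$ piece is exactly your computation (their Lemma~2.1). For the $(f^2)''$ piece (their Lemma~2.2) the paper uses a pointwise fact specific to calibrated submanifolds, namely $(V\rfloor\varphi)|_L=0$ (cited from \cite{Le1989,Le1990}), which gives $(\mathcal L_V\varphi)|_L=0$ and hence $f'(0,\cdot)=0$ pointwise; a second application of Cartan's formula and Stokes then kills $\int_L f''(0)$.

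You bypass both the minimality input and the ``cousin equation'' $(V\rfloor\varphi)|_L=0$. By differentiating $h^2=f^2+G$ directly and using only $G(0)=G'(0)=0$ and $h(0)=f(0)=1$, you obtain $h'(0)=f'(0)$ and $h''(0)=f''(0)+\tfrac12 G''(0)$ without needing $f'(0)=0$; the $(f'(0))^2$ terms cancel automatically. Then you dispose of $\int_L f''(0)$ by the standard calibration argument that $t\mapsto\int_L(\exp tV)^*\varphi$ is constant. This is more elementary and self-contained: it uses only $d\varphi=0$ and Stokes, not the pointwise vanishing of $V\rfloor\varphi$ along $L$. What the paper's route buys in exchange is the stronger pointwise information $f'(0,\cdot)=0$ (equivalently $h'(0,\cdot)=0$), which you recover only in integrated form.
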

Note that the equation  $\nabla_{\p t}|_{t =0}  \Psi((\exp  tV)_*(\xi(x))) = 0$  describes  the equation  for  $V$  to be an infinitesimal  deformation of  $\varphi$-calibrated  submanifold  $L$.
Thus  the second  variation formula  for  calibrated  submanifolds in Theorem \ref{thm:main} follows  from  the equation  for  Zariski tangent  vectors of the moduli space  of  calibrated submanifolds under consideration.  

From    our Main Theorem we obtain immediately  the following.

\begin{corollary}\label{cor:jac} Let $L$  be a   $\varphi$-calibrated submanifold  in  Theorem \ref{thm:main}. Then a normal vector  field $V$  on $L$ is an  infinitesimal deformation of $\varphi$-calibrated submanifolds if and only if  $V$ is a Jacobi vector  field  on $L$,  regarding $L$ as a  minimal  submanifold.
\end{corollary}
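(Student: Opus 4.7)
The plan is to recognise the Main Theorem as computing a squared $L^2$-norm of a linear operator applied to $V$, and to polarise this against the classical bilinear form furnished by the Jacobi operator on a minimal submanifold. First I would observe that the map
$$A\colon V \longmapsto \nabla_{\partial t}|_{t=0}\Psi((\exp tV)_*\xi(x))$$
is $\R$-linear in the normal vector field $V$: since $\partial_t|_{t=0}\exp tV = V$, the value $A(V)$ depends only on the $1$-jet of the flow at $t=0$, so $A$ is a first-order linear differential operator from sections of the normal bundle of $L$ to sections of $E|_L$. In this notation the Main Theorem reads
$${d^2\over dt^2}\Big|_{t=0}vol(L_t) = \int_L |A(V)|^2\, dvol = \|A(V)\|_{L^2(L)}^2.$$

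Next I would polarise. Since every $\varphi$-calibrated submanifold is minimal, the classical second variation formula for volume yields a self-adjoint Jacobi operator $J$ on the normal bundle together with the symmetric bilinear form $Q(V,W) = \int_L \langle J(V), W\rangle\, dvol$. Polarising the identity above, or equivalently applying the Main Theorem to the two-parameter flow $\exp(sV+tW)$ and extracting the mixed second derivative, gives
$$\int_L \langle A(V), A(W)\rangle_E\, dvol = \int_L \langle J(V), W\rangle\, dvol$$
for all smooth normal vector fields $V, W$. Passing to the formal adjoint $A^{*}$ then converts this into the operator identity $J = A^{*}A$ on sections of the normal bundle of $L$.

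Finally I would invoke the standard fact that $\ker(A^{*}A) = \ker A$: if $A^{*}A V = 0$ then $\|AV\|_{L^2}^2 = \langle A^{*}AV, V\rangle = 0$, so $AV = 0$, while the converse is immediate. Hence $J(V) = 0$ if and only if $A(V) = 0$, and the latter is precisely the equation identified in the remark after the Main Theorem as describing infinitesimal deformations of $L$ through $\varphi$-calibrated submanifolds. This is the desired equivalence.

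The main obstacle I expect is not the Hilbert-space bookkeeping but the justification that the polarised identity genuinely holds for independent $V$ and $W$, since the Main Theorem as stated only computes the diagonal case $V = W$. This can be handled either by revisiting the proof of the Main Theorem with a two-parameter variation, or more conceptually by appealing to the fact that the second variation of volume at a minimal submanifold is intrinsically a symmetric bilinear form on the normal bundle and is therefore determined by its diagonal values.
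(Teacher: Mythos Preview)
Your proposal is correct. The paper gives no detailed argument for this corollary, simply stating that it follows immediately from the Main Theorem, and your reasoning via the factorisation $J = A^{*}A$ is a valid and clean way to supply those details.

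One small remark: the polarisation step to the operator identity $J = A^{*}A$ is slightly more than is needed. The Main Theorem already yields the quadratic identity $\langle JV, V\rangle_{L^2} = \|AV\|_{L^2}^2$ for every normal $V$, and since $J$ is self-adjoint this alone forces $J \geq 0$; for a nonnegative self-adjoint elliptic operator one has $JV = 0$ if and only if $\langle JV, V\rangle_{L^2} = 0$, which in turn holds if and only if $AV = 0$. This is presumably the ``immediate'' argument the paper has in mind, and it sidesteps the concern you raise in your final paragraph about justifying the off-diagonal identity. Your route through $J = A^{*}A$ gives the same conclusion with the added bonus of an explicit factorisation of the Jacobi operator.
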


In fact,    Corollary  \ref{cor:jac}   holds  for any    compact  calibrated   submanifold  without  validity of  Harvey-Lawson's  identity, see Remark \ref{rem:gen} below.

As  applications of the Main Theorem,  we  shall derive simple  second variation formulas  for  associative,  coassociative  and Cayley submanifolds respectively, which agree   with McLean's  formulas up to a multiplicative  constant.

Our note is organized  as follows. In    Section \ref{sec:main}  we give  a  proof  of Theorem  \ref{thm:main}  and   discuss  a slight   generalization of it in Remark \ref{rem:gen}.
In  Section \ref{sec:g2} we  give   a new   proof  of McLean's    second variation formula   for  associative, coassociative  submanifolds  in $G_2$-manifolds
and derive from it a second  variation formula for  special Lagrangian   submanifolds  in Calabi-Yau 6-manifolds.  In  Section  \ref{sec:spin7}  we give a  new proof of McLean's   second  variation formula  for Cayley submanifolds. (Our    formulas  for associative  and Cayley  submanifolds  differ  from  McLean's  formulas by  a scaling factor). 
At the end of our note we 
explain  where   McLean did  mistakes in his computations (Remark \ref{rem:mistake}).

\section{Proof of the Main Theorem}\label{sec:main}
\begin{proof}[Proof of Theorem \ref{thm:main}]  Let us keep notations  in the previous section, in particular,  in  Theorem \ref{thm:main}.  Abusing the  notation, denote by $V$    a  vector  field  in a neighborhood of $L$    whose  value  at $L$ is the given  normal vector field  $V$, see     explanation in Theorem 
\ref{thm:main}.
Set 
$$\xi_t (x): = (\exp tV)_* (\xi(x)),$$
and
 $$g_t|_{L} : = (\exp  tV)^*  g|_{\exp  tV (L)},$$  
where $g|_{\exp  tV (L)}$ denotes the   metric on  $\exp  tV (L)$  induced  from the  ambient metric  on $M$.
  Denote by  $vol_t $   the    induced volume form on $L$ associated to $g_t$.
  Since  $  vol_t (x) = \det (g_{ij})  ^{1/2}  dx  =  |\xi_t  (x)| \cdot vol_0 (x)$,   taking into  account the minimality of $L$,
we observe  that 
for   all  $x \in L$
\begin{equation}
|\xi_0(x)| =1 \text {  and } \frac{d}{dt}|_{t =0}   |(\xi_t (x))|  = 0 .\label{eq:min}
\end{equation}

Hence

\begin{eqnarray}
\frac{d^2}{dt^2} |_{t =0} vol (\exp tV (L)) =  \int _{L}\frac{d^2}{dt^2} |_{t =0}|(\xi_t (x)|\, d\, vol_x\nonumber \\
 = \frac{1}{ 2} \int _{L}\frac{d^2}{dt^2} |_{t =0}|\xi_t (x)|^2\, d\, vol_x.\label{eq:jac1}
\end{eqnarray}

To simplify notation, we    write
$$ D (V) (x): = \nabla _{\p t}|_{ t =0} \Psi (\xi_t(x)).$$

 
\begin{lemma}\label{lem:chi}  For all $x\in L$ we have
$$\frac{d^2}{dt^2}|_{t=0} |\Psi(\xi_t(x))|^2 = 2|D(V)|^2 (x) .$$
\end{lemma}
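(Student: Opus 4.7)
The plan is to reduce the lemma to a direct application of the Leibniz rule for a metric connection, after first observing that $\Psi(\xi_0(x))=0$ on $L$.

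First I would exploit that $L$ is $\varphi$-calibrated. This means $\varphi(\xi_0(x))=1$ and $|\xi_0(x)|=1$ for every $x\in L$, so Harvey-Lawson's identity \eqref{eq:HLI2} forces
\[
|\Psi(\xi_0(x))|^2 \;=\; |\xi_0(x)|^2 - [\varphi(\xi_0(x))]^2 \;=\; 0,
\]
hence $\Psi(\xi_0(x))=0$ as an element of $E_x$.

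Next, write $s(t):=\Psi(\xi_t(x))$, a section of $E$ along the curve $t\mapsto \exp tV(x)$. By hypothesis the covariant derivative $\nabla_{\partial t}$ preserves the fiber metric on $E$, so the ordinary Leibniz rule applies to the real-valued function $|s(t)|^2=\langle s(t),s(t)\rangle$:
\begin{equation*}
\frac{d}{dt}|s(t)|^2 = 2\langle \nabla_{\partial t} s(t),\, s(t)\rangle,
\end{equation*}
\begin{equation*}
\frac{d^2}{dt^2}|s(t)|^2 = 2\langle \nabla_{\partial t}^2 s(t),\, s(t)\rangle + 2|\nabla_{\partial t} s(t)|^2.
\end{equation*}
Evaluating at $t=0$ and using $s(0)=\Psi(\xi_0(x))=0$, the first term drops out and I obtain
\[
\frac{d^2}{dt^2}\Big|_{t=0}|\Psi(\xi_t(x))|^2 = 2\,|\nabla_{\partial t}|_{t=0}\Psi(\xi_t(x))|^2 = 2|D(V)(x)|^2,
\]
which is the claimed identity.

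There is no serious obstacle here; the only thing worth being careful about is the justification of $\Psi(\xi_0(x))=0$, which is exactly where the calibrated condition together with Harvey-Lawson's identity is used (and is the reason this lemma is the computational heart of the Main Theorem, as it turns the second variation into a simple squared-norm integrand after plugging back into \eqref{eq:jac1}).
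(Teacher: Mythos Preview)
Your proof is correct and essentially identical to the paper's: both differentiate $|\Psi(\xi_t(x))|^2$ twice using the Leibniz rule for the metric connection and then kill the second-derivative term via $\Psi(\xi_0(x))=0$, which follows from Harvey--Lawson's identity on the calibrated submanifold. The only difference is cosmetic---you spell out explicitly why $\Psi(\xi_0(x))=0$, whereas the paper simply invokes the identity.
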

\begin{proof}  
We compute
$$\frac{d^2}{dt^2}|_{t=0} |\Psi(\xi_t(x))|^2 =    2 \frac{d}{dt}_{|t = 0} \la \nabla _{\p  t}  (\Psi (\xi_t )) , \Psi(\xi_t) \ra (x) $$
$$ = 2|D(V)|^2(x)+  2\la\Psi(\xi(x)) , \nabla_{\p t} \nabla _{\p t} (\Psi(\xi_t ))(x)|_{t=0}\ra . $$
Since $\Psi(\xi(x)) = 0$ by Harvey-Lawson's  identity, this completes  the proof of  Lemma \ref{lem:chi}.
\end{proof}

\begin{lemma}\label{lem:varphi} We  have
$$\frac{d^2}{dt^2}|_{t =0}\int_{L}\varphi(\xi_t) ^2 \, dvol_x = 0.$$ 
\end{lemma}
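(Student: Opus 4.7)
The plan is to combine two consequences of the hypotheses: closedness of the calibration $\varphi$, which makes $\int_L \varphi(\xi_t(x))\,dvol_x$ independent of $t$; and the Harvey--Lawson identity together with the minimality relation (\ref{eq:min}), which forces $\frac{\p}{\p t}|_{t=0}\varphi(\xi_t(x))$ to vanish pointwise on $L$. Once both ingredients are in place, the Leibniz expansion of the second derivative of $\varphi(\xi_t(x))^2$ collapses term-by-term to zero.

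First I would verify that $\int_L\varphi(\xi_t(x))\,dvol_x$ is constant in $t$. Setting $F_t:=\exp tV$, evaluation of $F_t^*\varphi$ on an oriented orthonormal frame of $T_xL$ gives $F_t^*\varphi = \varphi(\xi_t(x))\,dvol_x$, so
$$\int_L \varphi(\xi_t(x))\,dvol_x \;=\; \int_L F_t^*\varphi \;=\; \int_{F_t(L)}\varphi \;=\; \int_L \varphi,$$
the last equality by closedness of $\varphi$ together with the fact that $F_t(L)$ is homologous to $L$. Differentiating under the integral twice at $t=0$ yields $\int_L \frac{\p^2}{\p t^2}|_{t=0}\varphi(\xi_t(x))\,dvol_x = 0$. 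Next, since $L$ is $\varphi$-calibrated we have $\varphi(\xi(x))=|\xi(x)|=1$, and then (\ref{eq:HLI2}) forces $\Psi(\xi(x))=0$. Differentiating (\ref{eq:HLI2}) applied to $\xi_t(x)$ at $t=0$ gives
$$2\varphi(\xi(x))\cdot\frac{\p}{\p t}\Big|_{t=0}\varphi(\xi_t(x)) + 2\la D(V)(x),\Psi(\xi(x))\ra \;=\; \frac{\p}{\p t}\Big|_{t=0}|\xi_t(x)|^2,$$
in which the right-hand side vanishes by (\ref{eq:min}) and the second term on the left vanishes because $\Psi(\xi(x))=0$. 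Since $\varphi(\xi(x))=1$, this gives $\frac{\p}{\p t}|_{t=0}\varphi(\xi_t(x))=0$ pointwise on $L$.

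With both ingredients in hand, the Leibniz identity
$$\frac{\p^2}{\p t^2}\Big|_{t=0}\varphi(\xi_t)^2 \;=\; 2\Bigl(\frac{\p}{\p t}\Big|_{t=0}\varphi(\xi_t)\Bigr)^{\!2} + 2\varphi(\xi(x))\cdot\frac{\p^2}{\p t^2}\Big|_{t=0}\varphi(\xi_t),$$
integrated over $L$, yields Lemma \ref{lem:varphi}: the first term vanishes pointwise by the second step, and the integral of the second term vanishes by the first step. I do not anticipate any substantive obstacle; the only items deserving a moment's check are the pullback identity $F_t^*\varphi = \varphi(\xi_t)\,dvol_x$, which is immediate on an orthonormal frame of $T_xL$, and the interchange of differentiation with integration, which is routine since all data are smooth on the compact manifold $L$.
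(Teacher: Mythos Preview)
Your argument is correct. The overall structure---Leibniz-expanding $\partial_t^2|_{t=0}\varphi(\xi_t)^2$ into a first-derivative-squared term and a $\varphi(\xi_0)\cdot\partial_t^2|_{t=0}\varphi(\xi_t)$ term, then killing each separately---is the same as the paper's. The difference lies in how you obtain the two ingredients.

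For the pointwise vanishing of $\partial_t|_{t=0}\varphi(\xi_t(x))$, the paper does not invoke the Harvey--Lawson identity or (\ref{eq:min}); instead it uses the calibration-theoretic fact $(V\rfloor\varphi)|_L=0$ (equation (\ref{eq:cousin}), cited from \cite{Le1989}) together with $d\varphi=0$ and Cartan's formula to get $(\Ll_V\varphi)|_L=0$. Your route through (\ref{eq:HLI2}) and (\ref{eq:min}) is legitimate in the ambient setting of Theorem \ref{thm:main}, but note that the paper's argument for this step is independent of the Harvey--Lawson identity; this is exactly what allows the authors to extend the result in Remark \ref{rem:gen} to calibrations without a genuine $E$-valued $\Psi$. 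For the vanishing of $\int_L\partial_t^2|_{t=0}\varphi(\xi_t)\,dvol$, your homology-invariance argument and the paper's Cartan-then-Stokes computation $\int_L\Ll_V d(V\rfloor\varphi)=\int_L d(V\rfloor d(V\rfloor\varphi))=0$ are two phrasings of the same fact. In short: your proof is a valid and slightly more self-contained variant within the HL framework, while the paper's is marginally more general and feeds directly into Remark \ref{rem:gen}.
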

\begin{proof}  
Since $L$ is   a $\varphi$-calibrated submanifold,  by \cite[Proposition 2.2. (ii)]{Le1989},  see also  \cite[Proposition 1.2  (ii)]{Le1990},  we have
\begin{equation}
(V \rfloor   \varphi)_{| L } = 0.\label{eq:cousin}
\end{equation}
Using $d\varphi = 0$, we obtain   from (\ref{eq:cousin})
\begin{equation}
(\Ll_V \varphi)|_L (x) = 0 \text  { for all }  x\in L.\label{eq:Lie1}
\end{equation}
Now  let us compute
\begin{equation}
\frac{d^2}{dt^2}|_{t =0}\int_{L}\varphi(\xi_t) ^2 \, dvol_x = 2\frac{d}{dt}_{|  t =0} \int_{L}\varphi(\xi_t) \cdot \frac{ d} {dt}  ( \exp t V) ^* \varphi (\xi(x)) \, dvol_x .\label{eq:Lie2}
\end{equation}
Using (\ref{eq:Lie1}), we obtain from  (\ref{eq:Lie2}), noting that $\varphi (\xi(x)) = 1$
$$\frac{d^2}{dt^2}|_{t =0}\int_{L}\varphi(\xi_t) ^2 \, dvol_x = 2\int _{L} \Ll _V   d ( V \rfloor \varphi ) = 0.$$
This completes  the  proof  of  Lemma \ref{lem:varphi}.
\end{proof}

Now let us complete  the proof  of Theorem \ref{thm:main}.      
Using  (\ref{eq:jac1}), Harvey-Lawson's identity (\ref{eq:HLI2})  and Lemmas \ref{lem:chi}, \ref{lem:varphi}, we obtain
\begin{eqnarray}
\frac{d^2}{dt^2} |_{t =0} vol (\exp t V (L))= \frac{1}{2} \frac{d^2}{dt^2}|_{t=0} \int_L \la \Psi (\xi_t(x)), \Psi (\xi_t (x))\ra dvol_x\nonumber \\
= \int_L | D (V)|^2dvol_x.\label{eq:jac2}
\end{eqnarray}
This completes the proof of Theorem  \ref{thm:main}.
\end{proof}

\begin{remark}\label{rem:gen}   Any  calibration $\varphi$ on a  Riemannian  manifold  $M$  satisfies  a  weak version  of Harvey-Lawson's identity (\ref{eq:HLI2}), where  we replace  $\Phi \in \Om^*(M, E)$  by a real function,  also denoted by $\Phi$,  on the  Grassmannian  of    oriented  $k$-decomposable  vectors  in   $TM$. In this case, using the argument  of the proof of Theorem \ref{thm:main},  the  function under integral  in the  RHS of the formula  in Theorem  \ref{thm:main} is replaced by
$(\p _t |_{ t =0} \Phi (\xi _t  (x)))^2$.  Thus  Corrollary  \ref{cor:jac}  also holds  for   any calibrated  submanifold.
\end{remark}

\section{Second variation formula  for associative  and coassociative submanifolds}\label{sec:g2}

\subsection{Associative  and coassociative   submanifolds}\label{subs:ass}
In this subsection we recall basic  definitions    of  associative 3-submanifolds   and coassociative  4-submanifolds in  a $G_2$-manifold  $(M^7, \varphi, g)$   and show that   the associated calibrations  satisfy Harvey-Lawson's  identity (Lemmas \ref{lem:hlass}, \ref{lem:tau7}).

Let $\O$ denote the octonion algebra. 
Denote by $\la,\ra$  the scalar product on $\O$  and   by $ \cdot $   the  octonion  multiplication. 
Recall that the  {\it associative   3-form}  $\varphi$ on $\Im \O$  is defined  as follows \cite[(1.1), IV.1.A, p. 113]{HL1982}
\[
\varphi(x, y, z) : =  \la  x\cdot y , z \ra = \la  x, y\cdot z \ra.
\]
Let $\Im \O = \R^7$  have coordinates $(x^1, \cdots, x^7)$.    
We abbreviate   $dx^i \wedge  dx^j \wedge dx^k$  as $x ^{ijk}$.  
 The fundamental {\it associative} 3-form  $\varphi$
	can be written in     coordinate  expression as follows \cite[(1.2), p. 113]{HL1982}
\begin{equation} \label{eq:phi}
 \varphi = x^{123}+x^{145} -x^{167} + ^{246}+x^{257} +x^{347} - x^{356}.
 \end{equation}
Its dual
\begin{equation} \label{eq:starphi}
*\varphi=  x^{4567} + x^{2367} - x^{2345} + x^{1357} +x^{1346} + x^{1256} -x^{1247}
\end{equation}
is called  the {\it  coassociative     form}.

 It is  well-known that $G_2$, the  automorphism  group of $\O$, is  also the  subgroup of  $GL(\R^7)$  that preserves $\varphi$ (resp. $*\varphi$).
Let $g_0$ denote  the standard  Euclidean  metric  on $\R^7$.
We call $(\varphi_0, g_0)$   {\it  the standard  $G_2$-structure}.

Let $M^7$ be an oriented 7-manifold and 
$\varphi$  a 3-form on $M^7$. 
A 3-form $\varphi$ is called a {\it $G_2$-structure} on $M^7$ if 
for each $p \in M^7$, there exists an oriented  linear isomorphism $I_p$ 
between $T_{p}M^7$ and $\mathbb{R}^{7}$ 
identifying $\varphi_{p}$ with $\varphi_{0}$. 
Then $\varphi$ induces the metric $g_\varphi$ 
by pulling back  $g_0$    via  $I_p$.  Since  $G_2$ is a subgroup of ${\rm SO}(7)$  the    metric $g_\varphi$ 
does not depend  on the choice of  $I_p$.

In our paper we are       concerned  only  with    $G_2$-manifolds $(M^7, \varphi, g)$, i.e.   the  $G_2$-structure  on $(M^7, \varphi, g)$  is torsion-free,   equivalently  $d\varphi = 0$ and $d * \varphi  = 0$.

A 3-submanifold  $L\subset   M^7$ is called {\it associative}, if  $\varphi|_{L} = vol_{L}$.
A  4-submanifold $L\subset M^7$  is called {\it  coassociative},  if $*\varphi|_{L} = vol_{L}.$

We  shall show that  $\varphi$ and $* \varphi$  satisfy  Harvey-Lawson's  identity.  
We set  (\cite[p. 114]{HL1982}
\cite[Definition IV.1.11,   Proposition IV.1.14, p. 116]{HL1982})
\begin{equation}
\la  \chi (x, y, z),  w\ra   : =  * \varphi (x, y,z, w).\label{eq:coass}
\end{equation} 

We regard $\chi$ as   an element in $\Om ^3 (M^7, TM^7)$.

The following  Lemma is  a Harvey-Lawson's identity.
\begin{lemma}\label{lem:hlass}(\cite[Theorem IV.1.6, p. 114]{HL1982}) For all  $x, y, z\in TM^7$ we have
$$\varphi(x, y, z) ^2 + |\chi (x, y,z) |^2 =  |x\wedge y \wedge   z|^2 .$$
\end{lemma}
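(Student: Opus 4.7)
The plan is to verify the identity by first reducing to the case of an orthonormal triple via multilinearity, then using the $G_2$-symmetry to place the triple in a standard position, and finally performing a short coordinate computation from \eqref{eq:phi} and \eqref{eq:starphi}.

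For the reduction step, I observe that both sides are homogeneous quadratic functions of the decomposable $3$-vector $x\wedge y\wedge z$: if one changes basis in the plane $\mathrm{span}(x,y,z)$ by a $3\times 3$ matrix $A$, then each of $\varphi(x,y,z)$, $\chi(x,y,z)$, and $x\wedge y\wedge z$ acquires the factor $\det A$ by multilinearity and alternation, so all three squared quantities scale by $(\det A)^2$. Hence the identity is trivial when $x,y,z$ are linearly dependent, and otherwise I may replace the triple by any orthonormal basis of its span, reducing to $|x\wedge y\wedge z|^2=1$.

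Next, since the metric, $\varphi$, and $*\varphi$ are all $G_2$-invariant, so is the full identity. I would invoke the classical transitivity of $G_2$ on ordered orthonormal $2$-frames of $\Im\O$---a consequence of the transitivity of $G_2$ on unit imaginary octonions with stabilizer $\mathrm{SU}(3)$, together with the transitivity of $\mathrm{SU}(3)$ on $S^5$---to assume $x=e_1$ and $y=e_2$. The remaining vector $z=\sum_{k=3}^{7} z^k e_k$ is then a unit element of $\mathrm{span}(e_3,\dots,e_7)$.

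At this point the verification is combinatorial: from \eqref{eq:phi} only the summand $x^{123}$ contributes to $\varphi(e_1,e_2,z)$, giving $z^3$; from \eqref{eq:starphi} only the summands $x^{1256}$ and $-x^{1247}$ contribute to $*\varphi(e_1,e_2,z,\cdot)$, producing a vector in $\mathrm{span}(e_4,e_5,e_6,e_7)$ whose components are $z^4,z^5,z^6,z^7$ up to signs. Squaring and summing recovers $(z^3)^2+\cdots+(z^7)^2=|z|^2=1$, matching the right-hand side. The crux of the argument is precisely this perfect matching of index sets between $\varphi$ and $*\varphi$, which encodes the octonion structure on $\Im\O\cong\R^7$; a more conceptual alternative would identify $\chi$ with the triple cross product on $\R^7$ and deduce the identity from a Pythagorean-type statement for the associator via the Moufang identities, but the coordinate approach is the most direct.
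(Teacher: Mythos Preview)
Your argument is correct. Note, however, that the paper does not actually supply a proof of this lemma: it is stated with a citation to Harvey--Lawson \cite[Theorem IV.1.6]{HL1982} and used as a black box. So there is no ``paper's own proof'' to compare against directly.

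For context, Harvey--Lawson's original argument proceeds purely through octonion algebra: they identify $\chi(x,y,z)$ with (half of) the associator $[x,y,z]=(x\cdot y)\cdot z - x\cdot(y\cdot z)$ and derive the identity from the normed-algebra structure of $\O$, essentially the route you allude to at the end. Your approach is genuinely different: you trade the octonion identities for the transitivity of $G_2$ on orthonormal $2$-frames of $\Im\O$ and a short coordinate check against the explicit expressions \eqref{eq:phi} and \eqref{eq:starphi}. This is a clean and elementary alternative once those coordinate formulas are in hand; the only external input is the (standard) fact that the isotropy of $G_2$ at a point of $S^6$ is $\mathrm{SU}(3)$, which you correctly invoke. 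The coordinate verification you sketch is accurate: with $x=e_1$, $y=e_2$, one finds $\varphi(e_1,e_2,z)=z^3$ and $\chi(e_1,e_2,z)=z^7 e_4 - z^6 e_5 + z^5 e_6 - z^4 e_7$, whose squared norms sum to $|z|^2$.
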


 Now we  set   for  $x, y, z, w \in TM^7$ (\cite[(1.17),  Theorem 1.18,  p. 117]{HL1982})
\begin{equation}
 \tau(x, y, z, w): = -(\varphi(y, z, w) x+ \varphi(z, x, w) y+ \varphi(x, y, w) z+ \varphi(y,x,z) w).\label{eq:tau7}
\end{equation}

The following  Lemma is also  a Harvey-Lawson's identity

\begin{lemma}\label{lem:tau7} (\cite[Theorem IV.1.18, p. 117]{HL1982}) For all $x, y, z, w \in TM^7$ we  have
$$* \varphi(x, y, z, w) ^2 +|\tau(x, y, z, w)| ^2  =  |  x\wedge y \wedge z \wedge w | ^2 .$$
\end{lemma}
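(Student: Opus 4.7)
The strategy is to deduce this identity from Lemma \ref{lem:hlass} via Hodge duality on $T_pM^7 \cong \R^7$. Both sides are alternating-multilinear-squared, hence homogeneous quadratic functions of the decomposable $4$-vector $\xi := x\wedge y\wedge z\wedge w$; by homogeneity I may assume $(x,y,z,w)$ is orthonormal, and I extend it to a positively oriented orthonormal frame $(x,y,z,w,e_1,e_2,e_3)$ of $T_pM^7$. Set $\eta := e_1\wedge e_2\wedge e_3$, so that $\eta = *\xi$ under the Hodge duality $\Lambda^4 \to \Lambda^3$.

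The standard identity $\la \xi, *\varphi\ra = \la *\xi, \varphi\ra$ then gives $*\varphi(x,y,z,w) = \varphi(e_1,e_2,e_3)$, and applying Lemma \ref{lem:hlass} to the unit $3$-vector $\eta$ yields
\begin{equation*}
[*\varphi(x,y,z,w)]^2 + |\chi(e_1,e_2,e_3)|^2 \;=\; |\eta|^2 \;=\; 1 \;=\; |x\wedge y\wedge z\wedge w|^2.
\end{equation*}
Thus it remains to show $|\chi(e_1,e_2,e_3)| = |\tau(x,y,z,w)|$.

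For this I would expand $\chi(e_1,e_2,e_3)$ in the chosen frame. Its components along $e_i$ are $\la\chi(e_1,e_2,e_3), e_i\ra = *\varphi(e_1,e_2,e_3,e_i)$, which vanish by antisymmetry, so $\chi(e_1,e_2,e_3) \in \mathrm{span}(x,y,z,w)$. Each of the remaining four components $\la\chi(e_1,e_2,e_3), v\ra = \la *(e_1\wedge e_2\wedge e_3\wedge v), \varphi\ra$ for $v \in \{x,y,z,w\}$ reduces, after a Hodge-star calculation in the positively oriented frame, to $\pm\varphi$ evaluated on the complementary triple in $\{x,y,z,w\}\setminus\{v\}$. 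Careful sign tracking should yield
\begin{equation*}
\chi(e_1,e_2,e_3) \;=\; \varphi(y,z,w)\,x - \varphi(x,z,w)\,y + \varphi(x,y,w)\,z - \varphi(x,y,z)\,w,
\end{equation*}
and rewriting the definition of $\tau$ via the antisymmetry of $\varphi$ (in particular $\varphi(z,x,w) = -\varphi(x,z,w)$ and $\varphi(y,x,z) = -\varphi(x,y,z)$) shows that $\tau(x,y,z,w) = -\chi(e_1,e_2,e_3)$, so indeed the two norms agree.

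The only real obstacle is sign bookkeeping: the four Hodge-star signs must be internally consistent with the frame orientation, since a single error would destroy the clean identification $\tau = -\chi(\eta)$. A conceptually equivalent but more direct route is to invoke the Harvey-Lawson cross-product identities on $\R^7$ as in \cite[Theorem IV.1.18]{HL1982}; the Hodge-duality reduction above has the advantage of deriving Lemma \ref{lem:tau7} from the already-established Lemma \ref{lem:hlass} rather than re-doing an independent computation.
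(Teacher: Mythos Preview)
The paper does not actually prove this lemma; it is simply quoted from Harvey--Lawson \cite[Theorem IV.1.18]{HL1982}, where the original argument proceeds via octonionic identities. Your argument is correct and in fact supplies more than the paper does: you derive the coassociative identity from the associative one (Lemma~\ref{lem:hlass}) through Hodge duality, exploiting that $\tau$ is genuinely an alternating $TM^7$-valued $4$-form so that both sides depend only on the decomposable $4$-vector $\xi$. The sign bookkeeping you flag as the only delicate point goes through cleanly: with the orientation $x\wedge y\wedge z\wedge w\wedge e_1\wedge e_2\wedge e_3$, the permutation $(e_1,e_2,e_3,x,y,z,w)\mapsto(x,y,z,w,e_1,e_2,e_3)$ has $12$ inversions, so e.g.\ $*(e_1\wedge e_2\wedge e_3\wedge x)=y\wedge z\wedge w$ and hence $\la\chi(e_1,e_2,e_3),x\ra=\varphi(y,z,w)$; the remaining three components follow the same pattern and confirm $\tau(x,y,z,w)=-\chi(e_1,e_2,e_3)$ exactly as you state. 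Compared with the direct octonionic proof, your route has the conceptual advantage of showing that Lemmas~\ref{lem:hlass} and~\ref{lem:tau7} are Hodge-dual to one another rather than independent facts.
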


We  regard $\tau$  as an element in $\Om ^4(M^7, TM^7)$, see  also Remark \ref{rem:tau}.

\begin{example}\label{ex:lag} (\cite[12.2.1, p. 260]{Joyce2007}, \cite[p. 43]{CHNP2012}) Let  $(M^6, \om,\Om) $ be a Calabi-Yau manifold  with a fundamental 2-form $\om$ and a  complex  volume form $\Om$, see e.g. \cite{CS2002} for characterization of $SU(3)$-manifolds  via  $(\om, \Om)$. Denote by $g$ the associated Calabi-Yau metric on $(M^6, \om, \Om)$
Then $(S^1 \times M^6, d\theta \wedge \om + Re\,\Om,   d\theta ^2 + g)$  is a $G_2$-manifold.   If  $L$ is a  special  Lagrangian  submanifold in $(M^6, \om, \Om)$, then
$L_\theta: = \{ \theta\} \times L$  is  an associative  submanifold  in $ S^1 \times M^6$ for any $\theta \in  S^1$.
If $C$ is a complex curve in $M^6$, then  $S^1\times C$ is an associative
submanifold in $S^1 \times  M^6$.
\end{example}

\begin{example}\label{ex:lag1} (\cite[12.2.1, p. 260]{Joyce2007}) Let  $(M^6, \om,\Om) $ be a Calabi-Yau manifold  as above.
 If  $L$ is a  special  Lagrangian  submanifold in $(M^6, \om, \Om)$, then
$ S^1 \times L$  is  a coassociative  submanifold  in $( S^1 \times M^6, d\theta \wedge \om + Re\,\Om,   d\theta ^2 + g)$.  If $C$ is a complex surface in $M^6$, then  $C_\theta: =\{ \theta\} \times C$ is a  coassociative
submanifold in $S^1 \times  M^6$ for any $\theta \in  S^1$.
\end{example}

We refer the reader to \cite{Lotay2012, Kawai2014a, Kawai2014b}  for  consideration of homogeneous  associative  submanifolds in nearly  $G_2$-manifolds.

\begin{remark}\label{rem:associator}    An associative   3-form $\varphi$  defining   a $G_2$-structure  on a 7-manifold  $M^7$     can be   expressed   in terms of  the {\it cross  product}: $TM^ 7 \times TM^ 7 \to  TM^7$   defined  as follows
\cite[Definition B. 1, Appendix IV.B, p.  145]{HL1982}  
$$\varphi(x, y, z) = \la   x \times y,  z \ra. $$
\end{remark}

\subsection{The normal bundle  of  an    associative submanifold  and its associated Dirac  operator}\label{subs:spin}
We recall   known facts necessary  for    understanding    Formula (\ref{eq:infvar})   that enters  in  the proof of Theorem \ref{thm:jac}.
Our exposition  follows \cite[\S 5, p. 38-40]{CHNP2012},  and \cite[(1)-(5)]{Gayet2010}, see also Remarks \ref{rem:infvarm}, \ref{rem:mistake}  for comparison with McLean's formula. 

Let  $L$ be an   associative  3-fold  in a $G_2$-manifold  $(M^7, \varphi, g)$.   
 Since  $L$  is  orientable, it is  parallelizable,\footnote{the assertion is   well-known  for compact  orientable 3-manifolds. For  the proof of the case of   non-compact   orientable 3-manifolds  we refer the interested reader to    http://math.stackexchange.com/questions/1107682/elementary-proof-of-the-fact-that-any-orientable-3-manifold-is-parallelizable}   so we identify  $TL$ with  $L \times \Im \H$. Since  ${\rm rank\, }  NL > \dim L$,    therefore   there is a non-trivial section of $NL$.
  Using the  cross  product  $TL \times  NL \to NL$, we obtain the following

\begin{lemma}\label{lem:lemcorti} (\cite[Lemma 5.1, \S 5, p. 38]{CHNP2012}). The normal bundle  $NL$ of an associative submanifold $L$ is differentiably trivial.
\end{lemma}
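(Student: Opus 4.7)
My plan is to build a global trivialization of $NL$ from one nowhere-vanishing normal section $\nu$ together with a global frame of $TL$, using the cross product $\times$ on $TM^7$ defined by $\langle x\times y, z\rangle = \varphi(x,y,z)$ (Remark \ref{rem:associator}) to transport tangent vectors to normal vectors.

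First, I would record the two preliminary facts already signaled in the statement. Since $L$ is oriented and three-dimensional, it is parallelizable; fix a global frame $(e_1,e_2,e_3)$ of $TL$. Next, since the rank of $NL$ is $4$, which exceeds $\dim L=3$, a transversality/general-position argument yields a nowhere-zero section $\nu$ of $NL$, which I may normalize to $|\nu|\equiv 1$ (alternatively, the primary obstruction lives in $H^{4}(L;\pi_{3}(S^{3}))$, which vanishes because $L$ has dimension $3$).

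The heart of the proof is to show that the four sections
\[
f_{0}:=\nu,\qquad f_{i}:=e_{i}\times\nu\quad(i=1,2,3)
\]
form a pointwise frame of $NL$. The first step is to check that each $f_i$ actually lies in $NL$: for $y\in T_xL$, using the total antisymmetry of $\varphi$,
\[
\langle e_i\times\nu,\,y\rangle=\varphi(e_i,\nu,y)=-\varphi(e_i,y,\nu)=-\langle e_i\times y,\nu\rangle,
\]
and because $L$ is associative the cross product of two tangent vectors is tangent, so $e_i\times y\in T_xL$ and the inner product with $\nu$ vanishes. Next I would verify pointwise orthonormality of $\{f_0,f_1,f_2,f_3\}$. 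Orthogonality of $\nu$ to each $f_i$ follows from $\langle\nu,e_i\times\nu\rangle=\varphi(e_i,\nu,\nu)=0$. For the remaining inner products I would invoke the standard octonionic cross-product identity
\[
\langle a\times b,\,c\times d\rangle=\langle a,c\rangle\langle b,d\rangle-\langle a,d\rangle\langle b,c\rangle-{*\varphi}(a,b,c,d),
\]
applied with $b=d=\nu$ and $a=e_i$, $c=e_j$; the last term vanishes because $*\varphi$ is alternating with a repeated entry, and the two remaining terms give $\delta_{ij}$ since $(e_i)\perp\nu$ and $|\nu|=1$. Thus $\{f_0,f_1,f_2,f_3\}$ is a smooth orthonormal frame of $NL$, which gives the differentiable trivialization.

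The one step that is more than bookkeeping is the cross-product identity used in the orthonormality computation; if one prefers to avoid quoting it, the same conclusion can be obtained by a direct computation in the standard model $\Im\O=\R^{7}$ using (\ref{eq:phi})–(\ref{eq:starphi}), which is the route taken in \cite[\S 5]{CHNP2012}. The existence of the nowhere-zero $\nu$ is essentially automatic from the rank inequality, and the associativity of $L$ enters only through the single fact that $TL$ is closed under $\times$.
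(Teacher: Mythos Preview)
Your proof is correct and follows exactly the route the paper indicates in the paragraph preceding the lemma: parallelizability of $L$ gives a global frame $(e_1,e_2,e_3)$, the rank inequality $\mathrm{rk}\,NL>\dim L$ yields a nowhere-vanishing normal section $\nu$, and the cross product $TL\times NL\to NL$ then produces the remaining three sections $e_i\times\nu$. The paper itself gives no further argument, deferring to \cite{CHNP2012}; you have simply filled in the orthonormality verification (for which you should take $(e_i)$ orthonormal, as is always possible by Gram--Schmidt on a parallelizable manifold).
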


Let $\nabla$ denote the Levi-Civita connection defined by the metric $g$ on $M^7$. 
Denote by $\nabla  ^\perp$ the  induced  connection  in the normal bundle  $NL$.

Using this, we  express  the Dirac operator $\Ds:  \Gamma (NL) \to \Gamma (NL)$ as follows.
For any $x \in L$  let $e_1, e_2, e_3$  denote a positive  orthonormal basis of $T_xL$  and for $V \in \Gamma (NL)$ we set
$$\Ds (V)_x : = \sum_{i =1} ^3  e_i \times (\nabla ^\perp _{e_i}  V) .$$

\begin{example}\label{ex:gayet}(\cite[Proposition 4.7]{Gayet2010}, cf. \cite[p. 43]{CHNP2012})  Let  $L$ be  a   special Lagrangian submanifold  in
a  Calabi-Yau  manifold $(M^6, \om, \Om)$.  Using notations in Example \ref{ex:lag}, for $\theta \in S^1$ we have   
$$N L_\theta = \R \oplus  NL, $$
where $NL$ is the normal  bundle of $L$  in   $M^6$. 
Then we identify
$$\Gamma (NL_\theta) \ni V =  f_V \oplus  \alpha _V\in \Om ^0 (L) \oplus \Om ^1(L)$$
where  $f_V \in \Om^0(L)$    and   $\alpha_V\in \Om^1 (L) $ is dual    to $JV\in \Gamma  (NL)$  w.r.t.  the      associated Riemannian  metric,  equivalently  $\alpha_V = V \rfloor \om$, see  \cite[Theorem 3.13, p. 723]{McLean1998}.  Using this identification we rewrite  the   Dirac operator  $\Ds:  \Gamma (NL_\theta) \to  \Gamma (NL_\theta)$   as  follows
$$ \Ds : \Om ^0 (L) \times \Om^1 (L) \to \Om^0(L)  \times \Om ^1 (L),$$
\begin{equation}
\Ds (f_V, \alpha_V) = (*d*\alpha_V,  -df_V -d*\alpha_V). \label{eq:sldirac}
\end{equation}
(The formula  in (\ref{eq:sldirac}) is identical with the formula in \cite[Proposition 4.7]{Gayet2010} and differs  from  the one in \cite[p.43]{CHNP2012}  by  the sign (-1), noting that   $d^* \alpha_V = - * d * \alpha _V$.)
\end{example} 

\subsection{Second   variation of  the  volume  of  an associative submanifold}

In this subsection  we give a new  proof  of  McLean's  second variation formula for associative submanifolds (Theorem \ref{thm:jac}), correcting  a  coefficient  in RHS of Formula (5.7) in \cite[p. 737]{McLean1998}),  which is twice larger than  our coefficient. Then we derive    from  Theorem \ref{thm:jac}   the McLean second  variation  formula for 
special Lagrangian submanifolds  in Calabi-Yau 6-manifolds (Example \ref{ex:2var}).

  We assume  that $L$ is a closed associative    submanifold  in a $G_2$-manifold  $M$.   
To compute  the second  variation of the  volume of $L$, by Theorem \ref{thm:main}  and Lemma \ref{lem:hlass}, it suffices  to  have the following.   


\begin{lemma}\label{lem:gayet}(\cite[(1)-(5)]{Gayet2010})  Let $\xi(x)$  denote the unit  decomposable  3-vector  associated with the tangent  space $T_xL$. Then  for any $V \in NL$ we have
\begin{equation}
\nabla_{\p t} |_{ t=0} ( \chi (\exp (tV)_*(\xi(x))) = \Ds (V)(x)  \in N_xL. \label{eq:infvar}
\end{equation}
\end{lemma}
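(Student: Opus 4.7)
My approach is to reduce the computation at $x\in L$ to a pointwise octonionic identity in an adapted frame. I choose a positively oriented orthonormal basis $\{e_1,e_2,e_3\}$ of $T_xL$ with $e_i\times e_j=e_k$ for cyclic $(i,j,k)$, which is possible because $T_xL$ is associative, and extend each $e_i$ to a local vector field $\tilde e_i$ near $x$ and $V$ to a vector field on a neighborhood. Since $(M,\varphi,g)$ is a torsion-free $G_2$-manifold we have $\nabla\varphi=0$ and hence $\nabla\chi=0$ (as $\chi$ is obtained from $*\varphi$ by metric duality), so the product rule applied to $\chi(\xi_t(x))=\chi((\exp tV)_*(e_1),(\exp tV)_*(e_2),(\exp tV)_*(e_3))$ gives
\begin{equation*}
\nabla_{\p t}\big|_{t=0}\chi(\xi_t(x))=\chi(W_1,e_2,e_3)+\chi(e_1,W_2,e_3)+\chi(e_1,e_2,W_3),
\end{equation*}
where $W_i:=\nabla_{\p t}|_{t=0}(\exp tV)_*(e_i)$.

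To identify $W_i$, I expand the push-forward along the integral curve as $(\exp tV)_*(e_i)=\tilde e_i(\exp tV(x))-t(\Ll_V\tilde e_i)(x)+O(t^2)$ and use the torsion-free identity $\Ll_V\tilde e_i=\nabla_V\tilde e_i-\nabla_{\tilde e_i}V$ to conclude $W_i=\nabla_V\tilde e_i-\Ll_V\tilde e_i=\nabla_{e_i}V$. Decomposing $\nabla_{e_i}V=(\nabla_{e_i}V)^{T}+\nabla^{\perp}_{e_i}V$ into tangential and normal components, the tangential parts drop out: by Harvey-Lawson's identity (Lemma \ref{lem:hlass}) specialized to $\xi(x)$ we have $\chi(e_1,e_2,e_3)=0$, and so by multilinearity and antisymmetry $\chi(w,e_j,e_k)=0$ for any $w\in T_xL$. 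Hence
\begin{equation*}
\nabla_{\p t}\big|_{t=0}\chi(\xi_t(x))=\chi(\nabla^\perp_{e_1}V,e_2,e_3)+\chi(e_1,\nabla^\perp_{e_2}V,e_3)+\chi(e_1,e_2,\nabla^\perp_{e_3}V).
\end{equation*}

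The final step is the pointwise algebraic identity
\begin{equation*}
\chi(e_i,e_j,v)=(e_i\times e_j)\times v\qquad\text{for }v\in N_xL,
\end{equation*}
valid on any oriented associative orthonormal frame (up to an overall sign determined by the conventions for $*\varphi$ and the cross product). This can be verified directly in the standard $G_2$-frame using (\ref{eq:starphi}) together with $\langle u\times v,w\rangle=\varphi(u,v,w)$, or deduced more conceptually from the octonion identity expressing the triple product in terms of the associator. Applying this with $e_i\times e_j=e_k$ (cyclic) and summing produces $\sum_{i=1}^{3}e_i\times\nabla^\perp_{e_i}V=\Ds(V)(x)$, as claimed. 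The main obstacle I foresee is precisely this last sign-bookkeeping: it depends sensitively on the orientation of $M$, the normalization of $*\varphi$, and the convention in the Dirac operator, and tracking it correctly is exactly what produces the corrected multiplicative constant relative to McLean's original formula.
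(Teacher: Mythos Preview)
Your argument is essentially the standard direct computation and is correct. The paper does not supply its own proof of this lemma; it simply cites Gayet, and in Remark~\ref{rem:cartan} it indicates that the analogous identity \eqref{eq:infvarm} can be obtained by the method used for Lemma~\ref{lem:1var} (i.e.\ rewriting $\chi(\xi_t)$ via a duality with the scalar form $*\varphi|_{L_t}$ and then applying Cartan's formula). Your route---using $\nabla\chi=0$, the torsion-free symmetry $\nabla_{\p t}|_{t=0}(\exp tV)_*(e_i)=\nabla_{e_i}V$, and the pointwise identity for $\chi(e_i,e_j,v)$ on the normal space---is the expected computational proof and matches what one finds in Gayet.

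Two remarks. First, your Lie-derivative derivation of $W_i=\nabla_{e_i}V$ is correct but more circuitous than necessary; the one-line argument is the symmetry $\nabla_{\p t}\p_s f=\nabla_{\p_s}\p_t f$ for the two-parameter map $f(s,t)=\exp tV(\gamma_i(s))$ with $\gamma_i'(0)=e_i$. Second, and more importantly, your closing sentence is mistaken: the overall sign in the identity $\chi(e_i,e_j,v)=\pm(e_i\times e_j)\times v$ (which, with the conventions \eqref{eq:phi}--\eqref{eq:starphi} of this paper, is actually a minus) is \emph{irrelevant} to the multiplicative constant in Theorem~\ref{thm:jac}, since $|\Ds(V)|^2=|-\Ds(V)|^2$. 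The factor-of-two discrepancy with McLean, as the paper explains in Remark~\ref{rem:mistake}, comes from McLean inserting an extraneous $2$ at the very last step of his argument, not from any sign bookkeeping in this lemma.
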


\begin{theorem}\label{thm:jac}(cf. \cite[Theorem 5.3]{McLean1998}) Let $L$ be an associative  submanifold in   a $G_2$-manifold  $(M^7, \varphi, g)$.  For any   normal vector field $V$ on $L$ with compact support, the second  variation of  the volume  of $L$  with  the variation  field $V$  is given by
\begin{equation}
\frac{d^2}{dt^2} |_{t =0} vol  (L_t) = \int _{L} \la  \Ds (V), \Ds (V) \ra dvol_x.\label{eq:2var}
\end{equation}
\end{theorem}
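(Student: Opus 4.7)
The plan is to assemble Theorem \ref{thm:jac} as a direct specialisation of the Main Theorem, using the associative calibration $\varphi$ together with its companion vector-valued form $\chi$ supplied by (\ref{eq:coass}). The Harvey-Lawson identity in the required form (\ref{eq:HLI2}) is exactly Lemma \ref{lem:hlass}, with $E=TM^{7}$ endowed with the Riemannian metric $g$ and the Levi-Civita connection $\nabla$; this connection preserves $g$, so the metric-compatibility hypothesis of Theorem \ref{thm:main} is satisfied. Thus the Main Theorem applies with $\Psi=\chi$ and yields
\[
\frac{d^{2}}{dt^{2}}\Big|_{t=0}\mathrm{vol}(L_{t})
=\int_{L}\bigl|\nabla_{\partial t}\big|_{t=0}\chi\bigl((\exp tV)_{*}\xi(x)\bigr)\bigr|^{2}\,dvol_{x}.
\]

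The second ingredient is Lemma \ref{lem:gayet}, which identifies the integrand above with the twisted Dirac operator acting on $V$: for every $x\in L$,
\[
\nabla_{\partial t}\big|_{t=0}\chi\bigl((\exp tV)_{*}\xi(x)\bigr)=\Ds(V)(x)\in N_{x}L.
\]
Substituting this into the formula from the Main Theorem gives (\ref{eq:2var}) directly. So the proof reduces to a two-line application of Theorem \ref{thm:main} once Lemmas \ref{lem:hlass} and \ref{lem:gayet} are in place.

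The only genuinely substantive step is the infinitesimal computation behind Lemma \ref{lem:gayet}, which is taken from Gayet. Conceptually, one differentiates $\chi(\xi_{t}(x))$ at $t=0$ using $\chi(\xi_{0}(x))=0$ (forced by Harvey-Lawson because $L$ is $\varphi$-calibrated); this kills the tangential component of the derivative and shows it lies in $N_{x}L$. Expanding $(\exp tV)_{*}$ in terms of $\nabla^{\perp}V$ and using the cross-product description of $\chi$ from Remark \ref{rem:associator} (namely $\chi(e_{1},e_{2},e_{3})$ decomposes into sums of terms $e_{i}\times(\nabla^{\perp}_{e_{i}}V)$ for an orthonormal frame $\{e_{i}\}$ of $TL$) gives the Dirac expression $\sum_{i}e_{i}\times\nabla^{\perp}_{e_{i}}V$.

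The main obstacle, once Lemma \ref{lem:gayet} is granted, is essentially bookkeeping: checking that Theorem \ref{thm:main} is stated for a compact $L$ (which is covered here, since $L$ is closed and $V$ has compact support), and that the covariant derivative appearing in Theorem \ref{thm:main} — a derivative in the ambient $TM^{7}$ along the curve $[\exp tV(x)]$ — is compatible with the normal covariant derivative $\nabla^{\perp}$ used to define $\Ds$. The compatibility follows because, after projection onto $N_{x}L$, the ambient Levi-Civita derivative along the curve at $t=0$ agrees with $\nabla^{\perp}_{V}$ on normal sections. This reconciles the formula, confirms (\ref{eq:2var}), and explains the factor $\tfrac{1}{2}$ discrepancy with (5.7) in \cite{McLean1998}, deferred to Remark \ref{rem:mistake}.
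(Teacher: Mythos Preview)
Your proof is correct and matches the paper's approach exactly: the paper simply states that Theorem \ref{thm:jac} follows from Theorem \ref{thm:main} together with Lemmas \ref{lem:hlass} and \ref{lem:gayet}, which is precisely the assembly you carry out. Your additional remarks about the bookkeeping and the content of Lemma \ref{lem:gayet} are accurate but go beyond what the paper itself supplies.
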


\begin{proof} 
Clearly Theorem \ref{thm:jac} follows  from  Theorem \ref{thm:main}  and  Lemmas \ref{lem:hlass}, \ref{lem:gayet}.
\end{proof}

\begin{example}\label{ex:2var} We  shall derive  a formula  for  the  second variation of  the volume of   a special Lagrangian submanifold  $L$  in a Calabi-Yau
manifold $(M^6, \om, \Om)$ from  Theorem \ref{thm:jac}, using  notations and formulas in  Example \ref{ex:gayet}. Let $V$ be a normal   vector field on $L$ in $(M^6, \om, \Om)$.  Then $V$ is  also  a normal  vector field of the associative  submanifold $L_0\subset S^1 \times M^6$.  Let $\Phi_t$ denote the variation   associated  to $V$ in $M^6$.
Then $\tilde \Phi_t  : =Id \times  \Phi_t$ is the associated variation   of $L_0 \subset S^1 \times  M^6$.  Since $\tilde \Phi_t (L_0)$ is isometric to $\Phi_t (L)$, we have
$${d ^2 \over dt^2}_{|t =0} vol (\Phi _t  (L)) = {d^2 \over dt^2 }_{|t =0}  vol (\tilde  \Phi_t (L_0)).$$
Applying  Theorem \ref{thm:jac}, taking into  account (\ref{eq:sldirac}), we obtain
\begin{equation}
{d ^2 \over dt^2}_{|t =0} vol (\Phi _t  (L)) = \int_{L} (|*d* \alpha_V - *d \alpha_V| ^2)dvol_x $$
$$ = \int _{L} (| d^* \alpha_V| ^2  + | d\alpha _V| ^2)dvol_x .\label{eq:2varsld}
\end{equation}
Our formula (\ref{eq:2varsld}) agrees  with  the formula in \cite[Theorem 3.13, p. 723]{McLean1998}.
\end{example}

\subsection{Second  variation of  the volume  of a coassociative   submanifold}

In this  subsection,   using Theorem \ref{thm:main}, we   give a new  proof of McLean's  second variation formula for  coassociative submanifolds (Theorem \ref{thm:McLeanco}).

Let  $L$ be a coassociative     submanifold  in a $G_2$-manifold $(M^7, \varphi,  g)$.  We  identify  the  normal bundle $NL$  with the bundle $\Lambda_+ ^2 T^*L$ as follows (\cite[Theorem 2.5]{JS2005},  cf \cite[Theorem 4.5]{McLean1998}).  Let us denote by $\Lambda ^2_+ T^*L$ the  bundle of self-dual 2-forms  on $L$. We   define   the  following  map
$$ NL \ni    V \mapsto  \alpha_V: =  (V\rfloor \varphi)_{| L}  \in  \Lambda ^2 _+  T^*  L. $$ 
The following  Lemma is due to McLean.

\begin{lemma}\label{lem:McLeanco} (cf. \cite[Theorem 4.5]{McLean1998},  cf.\cite[Theorem 2.5]{JS2005})   Assume that $L$ is a  coassociative  submanifold in $(M^7, \varphi, g)$ and $V\in \Gamma (NL)$ is a normal vector  field. Then
$${d\over  dt}|_{ t = 0} ((\exp tV)^*\varphi)|_{ L}  = d\alpha _V.$$
\end{lemma}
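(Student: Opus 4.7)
The statement is essentially a direct application of Cartan's magic formula combined with the torsion-free hypothesis on the $G_2$-structure, so my plan is as follows.

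First, I would identify the time derivative of the pullback as the Lie derivative. By the standard characterization of the flow derivative,
\begin{equation*}
\frac{d}{dt}\Big|_{t=0} (\exp tV)^* \varphi = \Ll_V \varphi,
\end{equation*}
where $V$ is understood as the ambient vector field extending the given normal field, as in the statement of Theorem \ref{thm:main}. Since the final equality is an equation of differential forms on $L$, it will not matter which extension of $V$ is used, provided the extension agrees with the given normal field along $L$.

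Next, I would apply Cartan's magic formula
\begin{equation*}
\Ll_V \varphi = d(V\rfloor \varphi) + V\rfloor d\varphi.
\end{equation*}
Here the torsion-free assumption on the $G_2$-structure $(M^7,\varphi,g)$ enters: since $d\varphi = 0$, the second term vanishes, leaving $\Ll_V \varphi = d(V\rfloor \varphi)$.

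Finally, I would pull back along the inclusion $i:L\hookrightarrow M$. By naturality of the exterior derivative, $i^* d = d\, i^*$, so
\begin{equation*}
\bigl(\Ll_V \varphi\bigr)\big|_L = i^* d(V\rfloor \varphi) = d\bigl(i^*(V\rfloor \varphi)\bigr) = d\alpha_V,
\end{equation*}
by the very definition $\alpha_V = (V\rfloor \varphi)|_L$.

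There is no serious obstacle, but one point worth emphasizing is that $\alpha_V$ indeed lies in $\Om^2(L,\Lambda^2_+ T^*L)$, i.e.\ is self-dual on $L$; this is the algebraic identification of $NL$ with $\Lambda^2_+ T^*L$ used to set up the statement, and follows from the $G_2$-linear-algebraic fact that for a coassociative 4-plane $\xi \subset \R^7$ and $V \perp \xi$, the 2-form $(V\rfloor\varphi)|_\xi$ is self-dual. This identification is needed only to view the conclusion as a statement about self-dual forms; the identity itself is a direct computation via Cartan's formula.
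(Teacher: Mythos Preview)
Your proof is correct. Note, however, that the paper itself does not supply a proof of this lemma: it merely states the result and attributes it to McLean and to Joyce--Salur via the citations in the lemma heading. The argument you give---identify the flow derivative of the pullback as $\Ll_V\varphi$, apply Cartan's formula, use $d\varphi = 0$ from the torsion-free hypothesis, and then restrict to $L$ using naturality of $d$ together with the definition $\alpha_V = (V\rfloor\varphi)|_L$---is exactly the standard proof found in those references, so there is no alternative approach here to compare against.
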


Now  we are ready to give  a new  proof of the following  Theorem due to McLean.

\begin{theorem}\label{thm:McLeanco} (\cite[Theorem 4.9, p. 731]{McLean1998})  Let $L$ be a    coassociative  submanifold  in a $G_2$-manifold  $(M^7, \varphi, g)$.  For any normal vector field $V \in \Gamma( NL)$  with compact support we have
\begin{equation}
\frac{d^2}{dt^2} |_{t =0} vol  (L_t) = \int _{L} \la  d\alpha_V,  d \alpha _V \ra dvol_x.\label{eq:coavar}
\end{equation}
\end{theorem}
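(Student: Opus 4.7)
The idea is to apply Theorem \ref{thm:main} to the pair $(*\varphi,\tau)$: Lemma \ref{lem:tau7} is precisely the Harvey-Lawson identity required by the hypothesis, with $E=TM^{7}$ equipped with the Levi-Civita connection (which is metric, as needed). The Main Theorem then reduces the formula (\ref{eq:coavar}) to the pointwise identification
\[
\bigl|\nabla_{\p t}\big|_{t=0}\tau(\xi_{t}(x))\bigr|^{2}=|d\alpha_{V}|^{2}_{x},\qquad x\in L,
\]
after which integration over $L$ gives the claim. The bulk of the argument is therefore a local computation at a single point of $L$ aimed at this pointwise identity.

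To carry it out I would fix $x\in L$, choose a positively oriented orthonormal basis $e_{1},\ldots,e_{4}$ of $T_{x}L$, and set $f_{i}(t):=(\exp tV)_{*}e_{i}$, so that $\xi_{t}(x)=f_{1}(t)\wedge\cdots\wedge f_{4}(t)$. The decisive structural input is that a $4$-submanifold is coassociative iff $\varphi|_{L}=0$; hence $\varphi(e_{i_{1}},e_{i_{2}},e_{i_{3}})=0$ for every triple and in particular $\tau(\xi(x))=0$, consistent with Harvey-Lawson's identity. Using the alternation of $\varphi$ to repackage the four summands in the definition (\ref{eq:tau7}), I would rewrite
\[
\tau(\xi_{t}(x))=\sum_{i=1}^{4}(-1)^{i}\,\varphi\bigl(f_{1}(t),\ldots,\widehat{f_{i}(t)},\ldots,f_{4}(t)\bigr)\,f_{i}(t).
\]
Applying $\nabla_{\p t}|_{t=0}$ and Leibniz, the terms in which the derivative falls on the vector $f_{i}$ pick up the factor $\varphi(e_{1},\ldots,\widehat{e_{i}},\ldots,e_{4})=0$ and vanish; only the terms differentiating the scalar coefficient survive.

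For each surviving scalar, I would interpret $\p_{t}|_{t=0}\varphi(f_{1}(t),\ldots,\widehat{f_{i}(t)},\ldots,f_{4}(t))$ as the value at $e_{1}\wedge\cdots\wedge\widehat{e_{i}}\wedge\cdots\wedge e_{4}$ of the $3$-form $\tfrac{d}{dt}|_{t=0}(\exp tV)^{*}\varphi$ on $L$. By Lemma \ref{lem:McLeanco} this $3$-form is exactly $d\alpha_{V}$, so
\[
\nabla_{\p t}\big|_{t=0}\tau(\xi_{t}(x))=\sum_{i=1}^{4}(-1)^{i}\,d\alpha_{V}(e_{1},\ldots,\widehat{e_{i}},\ldots,e_{4})\,e_{i}.
\]
Since $\{e_{i}\}$ is orthonormal and the four index sets $\{1,\ldots,4\}\setminus\{i\}$ exhaust the strictly increasing triples in $\{1,2,3,4\}$, squaring and summing recovers the standard orthonormal expression $|d\alpha_{V}|^{2}_{x}=\sum_{i_{1}<i_{2}<i_{3}}(d\alpha_{V}(e_{i_{1}},e_{i_{2}},e_{i_{3}}))^{2}$, which is the desired pointwise identity.

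The main obstacle is bookkeeping: tracking the signs produced by the alternation of $\varphi$ when putting $\tau(\xi_{t})$ into the form above, and verifying that the Leibniz contributions involving $\nabla_{\p t}f_{i}$ are killed completely by coassociativity. Once those two points are settled, the identification of the surviving coefficients with the components of $d\alpha_{V}$ is immediate from Lemma \ref{lem:McLeanco}, and the theorem follows directly from Theorem \ref{thm:main}.
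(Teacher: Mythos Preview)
Your proposal is correct and follows essentially the same strategy as the paper: apply Theorem~\ref{thm:main} together with the Harvey--Lawson identity of Lemma~\ref{lem:tau7}, and reduce everything to the pointwise identity $|\nabla_{\p t}|_{t=0}\tau(\xi_t(x))|^{2}=|d\alpha_V|^{2}_x$, which in both arguments comes down to Lemma~\ref{lem:McLeanco}. The only difference is presentational: the paper packages that pointwise computation via the Poincar\'e duality isomorphism $P_0:T_xL\to\Lambda^{3}T^*_xL$ (its Lemma~\ref{lem:1var}, based on the relation $P_t(\tau(\xi_t))=\varphi|_{L_t}\cdot|\xi_t|$), whereas you unwind the same identity directly by expanding $\tau$ in an orthonormal frame---your alternating-sum formula for $\tau(\xi_t)$ is exactly the coordinate expression of the paper's Poincar\'e-duality relation.
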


\begin{proof} 
Recall that $\tau$  is defined in  (\ref{eq:tau7}).  As   before we denote  by $\xi_t(x) : = (\exp tV )_*(\xi(x))$,   where $\xi(x)$  is the  decomposable 4-vector associated with $T_xL$. 

\begin{lemma}\label{lem:1var} Let  $L$ be a  compact coassociative     submanifold  and $V\in \Gamma(NL)$.  Then for all $x \in L$ we have
\begin{equation}
\la \nabla_{\p t}( \tau (\xi_t(x)))  , \nabla_{\p t} (  \tau (\xi_t(x)))\ra| _{t=0}= \la  d \alpha_V ,  d \alpha _V    \ra(x).\label{eq:1var}
\end{equation}
\end{lemma}
\begin{proof}
As before we set  $L_t : =  \exp  (tV) (L)$.  
  Define the Poincare duality map  for any  $y \in L_t$
		$$P_t : T_yL_t \to \Lambda  ^3 (T_y^*L_t),\:  v \mapsto   v \rfloor vol_{L_t}(y) .$$
Note that $\tau(\xi_t(x)) \in T_{\exp t V (x)} L_t$. Using $P_t$  and denoting $y: = \exp tV (x)$,   we  rewrite  the relation  (\ref{eq:tau7})       as  follows
\begin{equation}
P_{t} (\tau (\xi_t(y))) = \varphi_{|L_t}\cdot | \xi_t (y)|.\label{eq:poincare}
\end{equation}
 Using  (\ref{eq:poincare}),  noting      that  $\varphi  |_{ L} = 0$  and $| \xi_0 (x)| = 1$,  we obtain 
\begin{equation}
\nabla _{\p t} (  \tau (\xi_t))_{|t =0} (x)  = (P_0)^{-1} (\nabla _{\p t} (\varphi|_{  L_t}) _{ | t =0} (x)). \label{eq:poincare2}
\end{equation}
Denote  by $\Pi_t$ the  parallel transportation   from $\exp t V  (x) $ to $ x$ along   the  curve $\exp t V (x)$  that is induced by   the   connection $\nabla$,    and       abbreviate  
$$\varphi_t (x): = \varphi (\exp t V ) (x) |_{ L_t}, \:   D_t : =  (\exp tV)^*.$$ 
Then we   have
\begin{eqnarray}
\nabla _{\p t}  (\varphi|_{  L_t}) _{ | t =0} (x) = { d\over dt}|_{ t =0} [\Pi_t\circ  D_t^{-1}\circ  D_t ( \varphi_t  (x) ) ]= \nonumber \\
= {d\over dt }|_{ t =0} [D_t(\varphi_t (x))] \label{eq:flat1}
\end{eqnarray}
since $ \varphi_0 (x)= 0$. 
From (\ref{eq:flat1})
we obtain
\begin{equation}
\nabla _{\p t}  (\varphi|_{  L_t}) _{ | t =0} (x) = {d\over dt}|_{ t =0}  [( \exp  t V)  ^* (\varphi)]|_L .\label{eq:flat}
\end{equation}
 Using Lemma \ref{lem:McLeanco}, noting that $P_0$  is an isometry,    we derive  Lemma \ref{lem:1var} from (\ref{eq:poincare2})  and (\ref{eq:flat})  immediately.
\end{proof}

{\it Continuation  of  the  proof of Theorem \ref{thm:McLeanco}}. Clearly  Theorem  \ref{thm:McLeanco} follows from  Theorem \ref{thm:main} and  Lemmas \ref{lem:1var}, \ref{lem:tau7}.
\end{proof}

\begin{remark}\label{rem:cartan}
In \cite[p. 736]{McLean1998} McLean  gave  a  short proof of the following  formula
	\begin{equation}
{d\over dt}_{|t =0 }  ( \exp (tV)^* ( \chi))_{| L} = \Ds (V) \cdot  vol_L \in \Om^3(L, NL). \label{eq:infvarm}
\end{equation}
This formula,  which  looks   like  (\ref{eq:infvar}), was important   for   McLean's computation  of   infinitesimal   deformations of  associative  submanifolds.  Unfortunately,  in his proof McLean  applied  the Cartan formula  $\Ll _V (\phi) =  d(V \rfloor \phi) + V \rfloor  d\phi$    for   scalar  valued   differential forms  $\phi$ to   the tangent bundle  valued  forms  $\varphi$.
Using   the   argument   in   the proof  of Lemma \ref{lem:1var} 
 we  can easily  prove
(\ref{eq:infvarm}).    To  prove (\ref{eq:infvarm})  was one  of our  motivations  to revisit McLean's     second variation formulas.
\end{remark}

\section{Second variation formula for Cayley   submanifolds}\label{sec:spin7}

In this   section  we   give a new proof  of McLean's  second variation   formula  for a  compact Cayley submanifold  in  a Spin(7)-manifold  (Theorem \ref{thm:McLeanC}), correcting   a coefficient in  the RHS  of  Formula (6.16)  in \cite[p. 743]{McLean1998}, which is twice larger than our coefficient.  

\subsection{Cayley submanifolds in Spin(7)-manifolds  and cross  products}  In this subsection we recall  basic facts   concerning  Cayley submanifolds  in Spin(7)-manifolds that are important for
understanding of our proof of  McLean's second variation formula   for Cayley submanifolds.  Our main sources are \cite{HL1982}, \cite{Fernandez1986}, \cite{McLean1998}, \cite{Ohst2014}.

Let $(x_1, \cdots, x_8)$ be coordinates of $\R^8$. 
Define a 4-form $\Phi_0$ on $\R^8$ by \footnote{there are many choices of coordinates  on  $\O$,  which result
in  seemingly different $\Phi_0$ in  different papers  on  Spin(7)-geometry. Here we consistently follow  \cite{Ohst2014}, which agrees with \cite[Corollary 3.1,p. 120]{HL1982}}(\cite[(2.1)]{Ohst2014})
\begin{align*}
\Phi_0 =& dx^{1234} + dx^{1256} - dx^{1278} + dx^{1357} + dx^{1368} + dx^{1458} -dx^{1467}\\
           &- dx^{2358} + dx^{2367} + dx^{2457} + dx^{2468} - dx^{3456} + dx^{3478} + dx^{5678}, 
\end{align*}
where $dx^{i_1 \dots i_4}$ is an abbreviation of $dx^{i_1} \wedge \cdots \wedge dx^{i_4}$. 
The subgroup of ${\rm GL}(8, \R)$ preserving $\Phi_0$ is  Spin(7).  Let $g_0$ denote  the standard  metric on $\R^8$.  We call $(\Phi_0, g_0)$  
{\it the standard  ${\rm Spin(7)}$-structure}.

Let $M^8$ be an oriented 8-manifold and 
$\Phi$ be a 4-form on $M^8$. 
A   4-form $\Phi$ is called a {\it ${\rm Spin}(7)$-structure} on $M^8$ if 
for each $p \in M$, there exists an oriented isomorphism $I_p$ between $T_{p}M^8$ and $\mathbb{R}^{8}$ 
identifying $\Phi_{p}$ with $\Phi_{0}$. 
Then $\Phi$ induces the metric $g_\Phi$ by  pulling back  the   metric $g_0$  using $I_p$.  
Since ${\rm Spin}(7)$ is a subgroup of ${\rm SO}(8)$, 
$g_\Phi$ does not  depend on the choice of $I_p$.
In our paper  we shall consider    only  Spin (7)-manifolds $(M^8, \Phi, g)$, i.e.       manifolds   with $d\Phi =0$.

 The   4-form $\Phi_0$ has been discovered  by Harvey-Lawson in \cite{HL1982}, where they  call  it {\it the Cayley calibration}.
 
On a  Spin(7)-manifold $(M^8, \Phi, g)$     we define  
 a triple   cross  product $P\in \Om^3(M^8, TM^8)$  as follows   
\begin{equation}
\Phi (x, y, z, w) = \la x, P(y, z,w) \ra \label{eq:P}
\end{equation}

We shall show that   the  Cayley calibration $\Phi$ on any  Spin(7)-manifold  $(M^8, \Phi, g)$ satisfies  Harvey-Lawson's    identity.
To define    a bundle $E$ on $M^8$ and $\Psi \in \Om^4 (M^8, E)$   such that  $(\Phi, \Psi)$ satisfy  (\ref{eq:HLI2}),   we need  recall  the notion of the cross  product  on   $M^8$.

First we need the following  (point-wise) splitting  on   $(M^8, \Phi, g)$
$$\Lambda ^2 T^*M^8 = \Lambda ^2 _7 T^*M^8 \oplus  \Lambda ^2 _{21} T^*M^8, $$
where $\Lambda ^2 _kT^*M^8$ corresponds to  an irreducible  Spin(7)-module of dimension $k$ in  the  Spin(7)-module $\Lambda^2T^*M ^8$.

For a tangent vector $v \in TM^8$, define a cotangent vector $v^{\flat} \in T^* M^8$ 
by $v^{\flat} = g(v, \cdot)$. Define a 2-fold cross product $TM^8 \times TM^8 \rightarrow \Lambda^2_7 T^* M^8$ by 
$$
v \times w = 2 \pi_7 (v^{\flat} \wedge w^{\flat})
= \frac{1}{2} \left( v^{\flat} \wedge w^{\flat} - * (v^{\flat} \wedge w^{\flat} \wedge \Phi) \right)
$$
for $v, w \in TM^8$, where $\pi_7$ denotes the  projection  to $\Lambda ^2 _7 T^*M^8$ according  to the  above splitting  of $\Lambda^2 T^*M^8$.

Now we set  $\tau \in \Om ^4 (M^8, \Lambda ^2 _7 T^*M^8)$ as  follows
\cite[(2.7)]{Ohst2014}.
\begin{equation}
\tau(a,b,c,d) : = - a \times P(b, c ,  d) +  \la a, b\ra (c \times d) + \la a, c\ra (d \times b) + \la a, d\ra (b \times c). \label{eq:tau}
\end{equation}


The  following  Lemma  asserts that $\Phi$ satisfies Harvey-Lawson's identity.

\begin{lemma}\label{lem:hlcay}(\cite[Theorem 1.28, p. 119]{HL1982})  For all $x, y, z, w \in TM^8$  we have
$$\Phi (x \wedge y \wedge z \wedge w) ^2 + |\tau(x, y,z,w)|^2 =  | x \wedge y \wedge z \wedge w| ^2 .$$
\end{lemma}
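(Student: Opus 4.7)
The identity is algebraic and pointwise, so I work at a single point of $M^8$, identifying the tangent space with $\R^8$ under the standard Spin(7)-structure $(\Phi_0, g_0)$. Both sides of the claim are Spin(7)-invariant: $\Phi$, the metric, the triple cross product $P$ (defined by (\ref{eq:P})) and the 2-fold cross product $\times$ (built from the Spin(7)-invariant projection $\pi_7$) are all Spin(7)-equivariant, and hence so is $\tau$. Both sides are also quadratic forms in the simple 4-vector $\xi := x\wedge y\wedge z\wedge w$ (using that $\tau$ is a 4-form, as the lemma statement records), so I may replace $(x,y,z,w)$ by an orthonormal frame spanning the same oriented 4-plane and reduce the claim to $\Phi(\xi)^2 + |\tau(\xi)|^2 = 1$.

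The structural input I would use is the Spin(7)-orbit decomposition of the oriented Grassmannian of $4$-planes in $\R^8$, as developed in Harvey-Lawson \cite[\S IV.1.C]{HL1982}. Each orbit has a representative in a simple normal form (parameterized by at most two ``Cayley angles'' $\alpha,\beta\in[0,\pi/2]$, with $\Phi(\xi)$ reducing to a product of cosines, and the Cayley planes corresponding to $\alpha=\beta=0$), so by Spin(7)-invariance of both sides it suffices to verify the identity on one representative per orbit. In such a normal form I would compute $P(y,z,w)\in\R^8$ from (\ref{eq:P}) together with the explicit coordinate expression for $\Phi_0$; compute each 2-fold cross product appearing in (\ref{eq:tau}) via the formula $u\times v=\tfrac12\bigl(u^\flat\wedge v^\flat-*(u^\flat\wedge v^\flat\wedge\Phi)\bigr)$; assemble $\tau(x,y,z,w)$ as an element of $\Lambda^2_7 T^*\R^8$; and expand its squared norm in an explicit orthonormal basis of this seven-dimensional subspace. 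A short trigonometric simplification should then collapse the result to $1-\Phi(\xi)^2$, finishing the proof.

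The principal obstacle is the bookkeeping in the last step: $\tau$ has four contributions, the formula for $\times$ contains a Hodge star and a $\Lambda^2_7$-projection, and signs across Hodge duals must be tracked carefully. A shortcut I would try first, in the spirit of Harvey-Lawson's original proof, is to avoid coordinate computation entirely beyond two endpoints of the normal-form family: the standard Cayley plane $\xi_0 = e_1\wedge e_2\wedge e_3\wedge e_4$, where $\Phi(\xi_0)=1$ and one verifies $\tau(\xi_0)=0$ by direct inspection of (\ref{eq:tau}), and one maximally non-Cayley representative. The identity then propagates across the Grassmannian by Spin(7)-equivariance and a connectedness argument, which is essentially the route taken in \cite[Theorem 1.28, p.~119]{HL1982}.
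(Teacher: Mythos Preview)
The paper does not prove this lemma at all; it simply cites \cite[Theorem 1.28, p.~119]{HL1982} and moves on. So there is nothing in the paper to compare your argument against beyond the bare reference.

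On the substance of your sketch: the reduction by ${\rm Spin}(7)$-invariance to a normal form on the Grassmannian is a legitimate strategy, and your observation that both sides depend only on the simple $4$-vector $\xi$ is correct. However, your proposed ``shortcut'' has a genuine gap. Checking the identity at the Cayley plane $\xi_0$ and at one ``maximally non-Cayley'' representative does \emph{not} establish it on all orbits: ${\rm Spin}(7)$-equivariance only makes the two sides constant along each orbit, and a connectedness argument alone cannot carry an equality across distinct orbits. You would need either to verify the identity on a representative of \emph{every} orbit (i.e.\ for all values of the angle parameters), or to supply an additional structural reason---for instance, that along the normal-form family both sides are polynomials of bounded degree in the angle parameters, so that agreement at finitely many points forces agreement everywhere. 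You gesture at neither.

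It is also worth noting that Harvey--Lawson's own proof of Theorem~1.28 does not proceed via the orbit decomposition you describe; it is a direct octonionic computation using the algebraic identities for the triple and four-fold cross products on $\O$. So your remark that the endpoint-plus-connectedness route ``is essentially the route taken in \cite[Theorem~1.28]{HL1982}'' is not accurate.
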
  
\begin{remark}\label{rem:tau} (\cite[Proposition IV.B.14, p. 149]{HL1982})   Let $(M^7, \varphi, g)$  be a $G_2$-manifold. Then  $(S^1 \times M^7,  d\theta \wedge \varphi + * \varphi,  d\theta ^2  + g)$ is
 a Spin(7)-manifold. Furthermore,  for any $\theta \in S^1$,  the restriction of    $\tau$   on $M^8$  to  $\{ \theta \} \times M^7$  is    equal to  the   4-form $\tau$  defined  in (\ref{eq:tau7}).
Thus  we use the notation $\tau$ for   the form on $M^7$ as well   as  for  the form on $M^8$.
\end{remark}

	

Recall that  a 4-submanifold $L$ in a Spin(7)-manifold  $(M^8, \Phi, g)$ is called  {\it Cayley}, if $L$ is calibrated by $\Phi$, i.e.  $\Phi |_{ L} = vol_L$.

\begin{example}\label{ex:coass1}  Assume that $(M^7, \varphi, g)$  be a $G_2$-manifold  and  $L$ is a coassociative  submanifold  in  $(M^7, \varphi, g)$.  Then $(S^1 \times M^7, dt\wedge \varphi + * \varphi, dt^2 + g)$  is a Spin(7)-manifold  and $\{\theta\} \times L$ is its Cayley  submanifold  for any $\theta \in  S^1$.
\end{example}

\subsection{The normal bundle of a Cayley submanifold  and   its associated  Dirac type  operator}
We collect   known results   from  \cite[Section 6]{McLean1998}  and \cite[\S 2, 3]{Ohst2014}.

Let  $L$ be a Cayley submanifold  in   a Spin(7)-manifold $(M^8, \Phi, g)$.   Then  the bundle $\Lambda ^2_- T^* L$ of anti-self dual  2-forms on
$L$ is isomorphic  to a subbundle  of   the bundle $\Lambda ^2 _7 T^*M^8|_{L}  \subset \Lambda ^2 T^*M^8|_{L}$ via the   following embedding (\cite[Section 6]{McLean1998}, see also \cite[Section 2]{Ohst2014})
\begin{equation}
\Lambda ^2_-T^*L \to \Lambda ^2 _7 T^*M^ 8|_{L} , \:   \alpha \mapsto 2 \pi_7 (\alpha) = {1\over 2} ( \alpha - * ( \alpha \wedge \Phi)),\label{eq:e1}
\end{equation}
where   we extend  $\alpha  \in \Lambda ^2 _- T^*L$ to $\Lambda ^2 T^*M^8 |_{ L}$ by $v\rfloor \alpha = 0$  for all $ v \in NL$, and
$\pi_7$ is  defined above.  
Let  $E_L$ denote  the orthogonal  complement   of $\Lambda ^2 _- T^*L$  in  $\Lambda ^2 _7 T^*M^8|_{L}$, i.e.
$$ \Lambda ^2 _7 T^*M^8|_{L} \cong  \Lambda_- ^2T^* L \oplus  E_L.$$
Note that      $E_L$ has rank 4. 
Furthermore  the cross  product   restricts  to $ TL \times NL  \to E_L$. 
Now we define  a    Dirac type   operator $D: \Gamma (NL) \to \Gamma (E_L)$ as follows (cf.  Subsection 3.2)
$$ D (s) : =  \sum _{i =1} ^4  e _i \times \nabla _{e_i} ^\perp  s,$$
where $e_i$ denote  a positive orthonormal basis  of $T_xL$  and $\nabla  ^\perp$ is the induced  connection on the normal bundle.

\subsection{Second   variation of  the volume of a  Cayley submanifold}
To     derive   the   second variation  formula  we  use the  following   Lemma, which is an  analogue of  Lemma \ref{lem:gayet}.

\begin{lemma}\label{lem:ohst} (\cite[Theorem 3.1]{Ohst2014}) Let $\xi(x)$  denote the unit  decomposable  4-vector  associated with the tangent  space $T_xL$. Then
\begin{equation}
\nabla_{\p t}  ( \tau( (\exp TV)_*\xi(x)))_{| t =0} =  D (V)(x) \in E_L(x). \label{eq:infvarc}
\end{equation}
\end{lemma}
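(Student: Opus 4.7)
The plan is to differentiate the bundle-valued 4-form $\tau$ applied to the pushed-forward 4-vector at $t=0$, exploit the fact that on a torsion-free Spin(7) manifold $\tau$ is Levi-Civita parallel while $\tau(\xi(x))=0$ on a Cayley submanifold, and finally match the outcome with $D(V)(x)$ via the algebraic formula (\ref{eq:tau}).

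First I pick at $x$ a positively oriented orthonormal basis $e_1,\dots,e_4$ of $T_xL$, so that $\xi(x)=e_1\wedge\cdots\wedge e_4$ is a Cayley frame with $\Phi(\xi(x))=1$. Writing $\phi_t:=\exp tV$ and $e_i(t):=(\phi_t)_*e_i$, the Jacobi identity $\nabla_{\partial t}\partial_s c=\nabla_{\partial s}\partial_t c$ for the Levi-Civita connection, applied to the variation $c(s,t)=\phi_t(\gamma_i(s))$ where $\gamma_i$ is an integral curve of $e_i$ through $x$, gives $\nabla_{\partial t}|_{t=0} e_i(t)=\nabla_{e_i}V$.

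Because $d\Phi=0$ implies $\nabla\Phi=0$, the metric, the triple cross product $P$, the 2-fold cross product $\times$, and hence $\tau$ are all parallel. The Leibniz rule along the curve $\phi_t(x)$ then yields
\begin{equation*}
\nabla_{\partial t}|_{t=0}\tau(\xi_t(x))=\sum_{i=1}^4\tau(e_1,\dots,\nabla_{e_i}V,\dots,e_4),
\end{equation*}
with $\nabla_{e_i}V$ in the $i$-th slot. Splitting $\nabla_{e_i}V=(\nabla_{e_i}V)^T+\nabla^\perp_{e_i}V$, the tangential part drops: if $(\nabla_{e_i}V)^T=\sum_k a_k e_k$, antisymmetry of $\tau$ kills the $k\neq i$ terms, while the $k=i$ term equals $a_i\tau(\xi(x))$, which vanishes by Harvey-Lawson's identity (Lemma \ref{lem:hlcay}) together with $|\xi(x)|=\Phi(\xi(x))=1$.

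For the normal component $v:=\nabla^\perp_{e_i}V$, the definition (\ref{eq:tau}) and $\langle v,e_j\rangle=0$ leave only $-v\times P(e_1,\dots,\widehat{e_i},\dots,e_4)$. A short check on the standard model (the coordinate expression of $\Phi_0$ contains no monomial with three of a Cayley-frame's indices together with one transversal index) establishes the Cayley-closure property $P(e_1,\dots,\widehat{e_i},\dots,e_4)=(-1)^{i-1}e_i\in T_xL$. Combining this with the sign $(-1)^{i-1}$ needed to move $v$ into slot $1$ by antisymmetry, and with $v\times e_i=-e_i\times v$, one obtains $\tau(e_1,\dots,v,\dots,e_4)=e_i\times\nabla^\perp_{e_i}V$; summing over $i$ produces $D(V)(x)$, which lies in $E_L(x)$ because the 2-fold cross product of a tangent with a normal vector sits in the $E_L$-summand of $\Lambda^2_7 T^*M^8|_L$ from the preceding subsection. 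The main obstacle is the algebraic bookkeeping in this last step: tracking signs through the antisymmetry of $\tau$, verifying the Cayley-closure property $P(T_xL,T_xL,T_xL)\subset T_xL$, and confirming that $\mathrm{tangent}\times\mathrm{normal}$ lands in $E_L$ rather than in the $\Lambda^2_-T^*L$-summand; everything else is Leibniz plus parallelism.
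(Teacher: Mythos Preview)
The paper does not prove this lemma; it is simply quoted from \cite[Theorem 3.1]{Ohst2014} and then used as a black box in the proof of Theorem \ref{thm:McLeanC}. Your argument is correct and is essentially the standard one (the Spin(7) analogue of Gayet's computation cited in Lemma \ref{lem:gayet}): the key ingredients---parallelism of $\tau$ on a torsion-free Spin(7)-manifold, the Leibniz rule along $t\mapsto\exp tV(x)$, the vanishing of tangential contributions via $\tau(\xi(x))=0$, the Cayley-closure identity $P(e_1,\dots,\widehat{e_i},\dots,e_4)=(-1)^{i-1}e_i$, and the fact (stated explicitly in the preceding subsection) that the 2-fold cross product maps $TL\times NL$ into $E_L$---are all valid and correctly assembled, and your sign bookkeeping in the last step checks out.
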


\begin{theorem}\label{thm:McLeanC} (\cite[Theorem 6.4, p. 743]{McLean1998}) Let $L$ be a  compact  Cayley submanifold in   a Spin(7)-manifold  $(M^8, \Phi, g)$.  For any   normal vector field $V$ on $L$ with compact support, the second  variation of  the volume  of $L$  with  the variation  field $V$  is given by
\begin{equation}
\frac{d^2}{dt^2} |_{t =0} vol  (L_t) = \int _{L} \la  D (V), D (V) \ra dvol_x.\label{eq:cayleyvar}
\end{equation}
\end{theorem}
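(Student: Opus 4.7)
The plan is to prove Theorem \ref{thm:McLeanC} by directly invoking the Main Theorem (Theorem \ref{thm:main}) with the Cayley calibration $\varphi := \Phi \in \Omega^4(M^8)$ and the vector-valued form $\Psi := \tau \in \Omega^4(M^8, \Lambda^2_7 T^*M^8)$ defined in (\ref{eq:tau}). The vector bundle $E$ in the Main Theorem is taken to be $\Lambda^2_7 T^*M^8$, which inherits the induced Riemannian metric and metric-compatible connection from the Levi-Civita connection on $M^8$, so the hypothesis that $\nabla_{\partial t}$ preserves the metric on $E$ is automatic.

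First I would verify that the calibration $\Phi$ and the form $\tau$ satisfy Harvey-Lawson's identity (\ref{eq:HLI2}). This is precisely the content of Lemma \ref{lem:hlcay}: for all $x, y, z, w \in T_pM^8$,
\[
\Phi(x \wedge y \wedge z \wedge w)^2 + |\tau(x,y,z,w)|^2 = |x \wedge y \wedge z \wedge w|^2,
\]
so the hypothesis of Theorem \ref{thm:main} holds with this choice.

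Next I would identify the integrand appearing in the conclusion of the Main Theorem. With $\xi_t(x) := (\exp tV)_*(\xi(x))$, Theorem \ref{thm:main} gives
\[
\frac{d^2}{dt^2}\Big|_{t=0} \mathrm{vol}(L_t) = \int_L \bigl| \nabla_{\partial t}|_{t=0} \tau(\xi_t(x)) \bigr|^2 \, dvol_x.
\]
By Lemma \ref{lem:ohst}, the integrand satisfies
\[
\nabla_{\partial t}|_{t=0} \tau(\xi_t(x)) = D(V)(x) \in E_L(x) \subset \Lambda^2_7 T_x^* M^8.
\]
Substituting this into the previous formula yields (\ref{eq:cayleyvar}).

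This proof is essentially bookkeeping: the substantive work is already packaged in Lemma \ref{lem:hlcay} (the Harvey-Lawson identity for the Cayley calibration) and Lemma \ref{lem:ohst} (the identification of the infinitesimal variation of $\tau$ with the twisted Dirac-type operator $D$). The main potential obstacle is simply to check that no range issue arises: although $\tau$ a priori takes values in $\Lambda^2_7 T^*M^8$, the image under $D$ naturally lies in the rank-$4$ subbundle $E_L \subset \Lambda^2_7 T^*M^8|_L$ orthogonal to $\Lambda^2_- T^*L$, so the squared norm $|D(V)|^2$ is unambiguous and no projection corrections are needed. Once this is observed, the proof reduces to the one-line combination of Theorem \ref{thm:main}, Lemma \ref{lem:hlcay}, and Lemma \ref{lem:ohst}.
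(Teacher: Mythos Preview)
Your proposal is correct and matches the paper's own proof essentially line for line: the paper also derives Theorem \ref{thm:McLeanC} as an immediate consequence of Theorem \ref{thm:main} together with Lemmas \ref{lem:hlcay} and \ref{lem:ohst}. Your additional remarks about the metric-compatible connection on $E=\Lambda^2_7 T^*M^8$ and about $D(V)$ landing in $E_L$ are correct clarifications but are not needed beyond what the paper states.
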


\begin{proof} Theorem \ref{thm:McLeanC} is    a consequence of Theorem \ref{thm:main} and   Lemmas  \ref{lem:hlcay}, \ref{lem:ohst}.
\end{proof}

\begin{remark}\label{rem:mistake} We  would like to  explain  where  McLean   made  mistakes   leading  to  his  Theorems \cite[Theorem 5.3]{McLean1998} and  \cite[Theorem 6.4]{McLean1998} concerning the  second variation formulas for associative  and Cayley submanifolds.  His    general   formula \cite[Theorem 2.4, p. 711]{McLean1998} for second  variation  of  calibrated submanifolds  seems to be correct, at least we do not find  any mistake  in  McLean's application of that formula  to the  special Lagrangian  and coassociative submanifolds.   His computation    before  the end of the proof of Theorem 5.3 in \cite[p. 737]{McLean1998}  also agrees  with our  formula, but McLean suddenly added  a coefficient 2  to  his formula  in \cite [Theorem 5.3]{McLean1998}, referring  to \cite[Theorem 2.4]{McLean1998}, which has been  misprinted  there as Theorem 1.4.
The same  mistake  has been repeated  in  McLean's proof  of \cite[Theorem 6.4]{McLean1998}  in \cite[p. 743]{McLean1998}:  the last formula in McLean's proof of  \cite[Theorem 6.4]{McLean1998}  agrees with ours, but  McLean   added a coefficient 2  to his formula  \cite[Theorem 6.4]{McLean1998}, referring  to an  irrelevant  formula (2.13) in his paper.     We guess  that  McLean  worked  on    several versions of his paper and did not check  all formulas carefully, see  also   McLean's citation of Simon's  formulas   in \cite[p. 707, 717]{McLean1998}, which   are not consistent.
\end{remark}

\subsection*{Acknowledgement}  We thank  Kotaro Kawai  for a  helpful comment  on an early version of this paper  and  an anonymous  referee  for his  suggestions which improve    the exposition of our note.

\end{document}